\newcommand{\fst}{f^*}
\newcommand{\Y}{\Upsilon}
\newcommand{\fm}{f_m}
\newcommand{\alm}{\wh{\alpha}_m}
\newcommand{\xini}{\xi_{n,i}}
\newcommand{\eni}{e_{n,i}}
\newcommand{\blambda}{\wt{\lambda}}
\newcommand{\nux}{\nu}
\newcommand{\wY}{\ol{\Y}}
\begin{document}

\title{Convergence rates of Kernel Conjugate Gradient for random design regression}

\author{Gilles Blanchard\thanks{This research was partly supported by the DFG via Research Unit 1735 {\em Structural Inference in Statistics}.}
\\
Mathematics Institute, University of Potsdam\\
Karl-Liebknecht-Stra{\ss}e 24-25\\ 14476 Potsdam, Germany\\
\texttt{blanchard@math.uni-potsdam.de}\\
\and
Nicole Kr\"amer \\
Staburo GmbH\\
Aschauer Str. 30a,
81549 M\"unchen\\
\texttt{kraemer@staburo.de}
}
\date{}

\maketitle

\begin{abstract}  We prove statistical rates of convergence for kernel-based
least squares regression from i.i.d. data using a conjugate gradient algorithm,
where regularization against overfitting is obtained by early stopping.
This method is related to Kernel Partial Least Squares, a
 regression method that combines supervised dimensionality reduction with least squares projection.
 Following the setting introduced in earlier related literature,
 we study so-called ``fast convergence rates'' depending
 on the regularity of the target regression function (measured by a source condition
 in terms of the kernel integral operator) and
 on the effective dimensionality of the data
 mapped into the kernel space. We obtain upper bounds,
 essentially matching known minimax lower bounds, for the $\cL^2$ (prediction) norm
 as well as for the stronger Hilbert norm, if  the
 true regression function belongs to the reproducing kernel Hilbert space.
 If the latter assumption is not fulfilled, we obtain similar convergence rates
 for appropriate norms, provided additional unlabeled data are available.
\end{abstract}

\section{Introduction}

\subsection{Setting}

Consider the nonparametric random design regression (also
called ``statistical learning'') problem, where an
$n$-sample of observations $(X_i,Y_i)\in \cX \times \mbr, 1 \leq i
\leq n$ is assumed to be drawn i.i.d. from an unknown distribution
$P$\,.  Here and in the rest of this work, $\cX$ is assumed to be a
Radon space, for instance an open subset of $\mbr^d$\,.  The goal is the
estimation of the regression function $\fst(x) := \ee{(X,Y)\sim
  P}{Y|X=x}$\,; it is assumed that the true regression function $\fst$
belongs to the space $\mathcal{L}^2(\nux)$ of square-integrable
functions ($\nux$ denotes the $X$-marginal of $P$ on the space $\cX$).

If $\wh{f}$ is an estimator of $\fst$\,, its quality is measured
via the $\mathcal{L}^2(\nux)$ distance,
\begin{equation}
\label{eq:l2dist}
\norm{\wh{f} - \fst}^2_{2,\nux} = \ee{X\sim \nux}{(\wh{f}(X)-\fst(X))^2}.
\end{equation}
This distance is natural for random design regression, since, if we
interpret this setting as a prediction problem for a new independent example
$(X,Y)\sim P$ where the quality of prediction is measured by the squared
error loss $\ell(f,x,y)=(f(x)-y)^2$\,, then it is well-known that $\fst$
is the minimizer of the average prediction (or generalization) error
$\cE(f,P)=\ee{(X,Y)\sim P}{(f(X)-Y)^2}$ over all squared integrable
functions, and that the above distance coincides with the excess
prediction error:
\[
\norm{\wh{f} - \fst}^2_{2,\nux} = \cE(\wh{f}) - \cE(\fst)\,. 
\]

Assume that $k: \cX^2 \rightarrow \mbr$ is a real-valued {\em reproducing kernel} on the space $\cX$\,,
with associated reproducing kernel Hilbert space $\cH$\,.
The well-established principle of non-parametric estimation by reproducing kernel methods consists in
considering estimators admitting a {\em kernel expansion} of the form
\begin{equation}
\label{eq:kernexp}
\wh{f} = f_\wh{\alpha}(x) := \frac{1}{n} \sum_{i=1}^n \wh{\alpha}^{(i)} k(X_i,x)\,,
\end{equation}
where the real coefficients $\wh{\alpha}^{(i)}, 1\leq i \leq n$ are determined from the data
(see for example \citealp{CriSha04} and \citealp{SteChr08} for comprehensive references on the topic.)
To avoid some confusion, we point out that the normalization by $n^{-1}$ that we use here 
in the kernel expansion is not present in most references on the subject, but we find it 
technically convenient.

We denote by $K_n=\frac{1}{n}(k(X_i,X_j))_{i,j}\in \mathbb{R}^{n\times n}$
the normalized kernel matrix and by $\Y=(Y_1,\ldots,Y_n)^\top \in
\mathbb{R}^n$ the $n$-vector of response observations. A naive approach to determining the
vector of kernel expansion coefficients $\wh{\alpha}$ is to choose those in order that $f_{\wh{\alpha}}(X_i)=Y_i$
holds for all $i=1,\ldots,n$\,, that is, solving the linear equation
\begin{equation}
\label{eq:kernel_ls}
K_n \alpha =\Y \qquad \text{ with } \alpha \in \mathbb{R}^n\,.
\end{equation}
Assuming $K_n$ to be invertible, the solution $\wh{\alpha}$ of the above equation yields
an estimator $f_{\wh{\alpha}} \in \cH$ interpolating perfectly the training data, but 
that will presumably have very poor performance in terms of 
the $\mathcal{L}^2(\nux)$ distance \eqref{eq:l2dist}, or equivalently
having poor generalization error: this is the overfitting phenomenon. 
There is a  variety of possible approaches to counteract this effect by
finding a {\em regularized} solution of \eqref{eq:kernel_ls};
perhaps the most well-known one is
\begin{equation}
\label{eq:tyk}
\wh{\alpha}_\lambda = (K_n + \lambda I)^{-1}\Y,
\end{equation}
for some fixed parameter $\lambda>0$\,, known alternatively as kernel ridge regression, Tikhonov's regularization,
least squares support vector machine, or MAP Gaussian process regression;
for a theoretical study of the convergence rate properties of this approach,
see for instance \citet{CapDe-07}, \cite{SteChr08}.

In this paper, we study the conjugate gradient (CG) technique in combination with early
stopping to determine the vector of coefficients $\wh{\alpha}$\,. 
Conjugate gradient is a computationally efficient scheme to approximatively solve linear
systems such as \eqref{eq:kernel_ls}. The principle of CG is to
restrict the problem to a nested set of data-dependent
subspaces, the so-called Krylov subspaces, defined as
\begin{equation}
\label{eq:Km}
\cK_m(\Y,K_n) := \text{vect}\set{\Y,K_n \Y,\ldots,K_n ^{m-1} \Y  } = 
\set{ p(K_n)\Y, \; p \in \cP_{m-1}}\,,
\end{equation}
where $\cP_{m-1}$ denotes the set of real polynomials of degree at most $(m-1)$\,.
Denote by $\inner{.,.}$ the usual euclidean scalar product on $\mbr^n$ rescaled by the factor $n^{-1}$\,. 
We define the $K_n$-seminorm as
$\norm{\alpha}^2_{K_n}:= \inner{\alpha,\alpha}_{K_n} 
:= \inner{ \alpha, K_n \alpha} .$
Then the CG solution after $m$ iterations is formally defined as
\begin{equation}
\label{eq:crit_cg}
\wh{\alpha}_m= \text{arg}\min_{\alpha \in \cK_m(\Y,K_n)} \|\Y -K_n \alpha\|_{K_n}\,.
\end{equation}

It is not difficult to prove from \eqref{eq:Km} and \eqref{eq:crit_cg} that 
iterating CG for $n$ iterations returns $\wh{\alpha}_n = K_n^{\dagger} \Y$\,,
where $K_n^{\dagger}$ is the pseudo-inverse of $K_n$ (in other words, the solution
to \eqref{eq:kernel_ls} if $K_n$ is invertible, and its least squares approximate
solution otherwise) and thus will suffer the same overfitting phenomenon mentioned
above. However, CG is usually stopped an early
iteration $m \ll n$\,, thus returning an approximate solution.  In
the learning context considered here, beyond computational aspects making this
method attractive, this
early stopping is mainly used for its regularization properties.
The main contribution of this paper is to study the convergence rates
of this approach when the stopping iteration $m$ is suitably chosen.

Computationally, conjugate gradients have the appealing property that the
optimization criterion \eqref{eq:crit_cg} can be computed by a simple
iterative algorithm that constructs basis vectors $d_1,\ldots,d_m$ of
$\mathcal{K}_m(\Y,K_n)\,$ by using only {\em forward multiplication}
of vectors by the matrix $K_n$\,. Algorithm \ref{algo:cg} displays the
computation of the CG kernel coefficients $\alm$ defined by
\eqref{eq:crit_cg} (see for instance \citealp{Han95}, Section~2.2 and \citealp{Engl96}, Chapter~7.)
Formally, the output of $m$ steps of the algorithm matches exactly the definition \eqref{eq:crit_cg}. In practice, finite numerical machine precision means that  rounding
errors can accumulate. Several variations of the algorithm exist, some of which
have better reported numerical stability. We will not elaborate more on this topic, since
the focus of this paper is on theoretical convergence rates.

\begin{algorithm}[H]
\caption{Kernel Conjugate Gradient regression}
\KwIn{kernel matrix $K_n$\,, response vector $\Y$\,,
maximum number of iterations $m$}
{\bf Initialization}: $\wh{\alpha}_0 ={\bf 0}_n;  r_1 = \Y; d_1 = \Y ;
t_1 = K_n \Y$\;
\For{$i=1,\ldots,m$}{
$t_{i}=t_{i}/\|t_i\|_{K_n}$\,;
$d_{i}=d_{i}/\|t_i\|_{K_n}$ (normalization of the basis, resp. update vector)\;
$\gamma_i = \inner{\Y, t_i}_{K_n}$ (proj. of $\Y$ on basis vector)\;
$\wh{\alpha}_{i} = \wh{\alpha}_{i-1} + \gamma_i d_i$ (update)\;
$r_{i+1} = r_{i} - \gamma_i t_i$ (residuals)\;
$d_{i+1} = r_{i+1}  - d_i \inner{t_i,K_nr_{i+1}}_{K_n}$\,;
$t_{i+1} = K_n d_{i+1}$
(new update, resp. basis vector)\;
}
\KwResult{CG kernel coefficients  $\alm$\,, CG function $\fm=\sum_{i=1} ^n \wh{\alpha}^{(i)}_{m} k(X_i,\cdot)$}
\label{algo:cg}
\end{algorithm}

\subsection{Relation to existing work}

As we restrict the learning problem onto the Krylov space
$\mathcal{K}_m(\Y,K_n)\,$\,, the CG coefficients $\alm$ are of the
form $\alm=q_m(K_n) \Y$ with $q_m$ a polynomial of degree $\leq
m-1$\,. However, the polynomial $q_m$ is not fixed but depends on $\Y$
as well, making the CG method nonlinear in the sense that the
coefficients $\alm$ depend on $\Y$ in a nonlinear fashion.

This is in contrast to Tikhonov's regularization \eqref{eq:tyk}, and more generally
to the larger family of {\em spectral linear regularization} methods, which
estimate the expansion coefficients via $\wh{\alpha}_\lambda = F_\lambda(K_n) \Y$\,,
where $F_\lambda$ is an appropriately regularized but fixed approximation of the inverse
function. For results on the convergence rates of such linear regularization methods
in a kernel learning setting comparable to the one studied here, see
\citet{BauPerRos07,smalezhou2,CapDe-07,LogRosetal08,Cap10} and the recent advances
\citet{DicFosHsu15,BlaMuc16}. In particular, the convergence rates under source
condition type regularity and polynomial eigenvalue decay of the kernel integral operator
obtained in the present paper for kernel CG match the rates established
for spectral linear regularization methods by \citet{CapDe-07,Cap10,DicFosHsu15}
and \cite{BlaMuc16}.

Both linear regularization methods and the nonlinear CG method are
established techniques in the {\em inverse problem} literature, in a
deterministic setting (for a comprehensive overview see
\citealp{Engl96}.) The statistical kernel learning setting is markedly
different since both the design points and the error are stochastic,
however the convergence analysis in that setting owes a lot to the
mathematical techniques developed in the deterministic case. This is
true for linear regularization methods cited above, and holds as well
for CG: the present work builds notably on the seminal works of
\citet{Han95} and \citet{Nem86}.

Conjugate gradient methods have appeared  under the name of {\em partial least squares} (PLS) in the statistics literature \citep{Wold8401}, and a ``kernelized'' version of
PLS was developed by \citet{Rosipal0101} and is now considered part of the standard
toolbox of kernel methods (see \citealp{CriSha04}, Section 6.7.2).
An important difference with the method
we study here is that kernel PLS is defined via \eqref{eq:crit_cg} but with the $K_n$-norm
replaced by the regular $n$-dimensional Euclidean norm.
In conjugate gradient parlance, kernel PLS is ``Conjugate Gradient - Minimum Error''
(CGME) while we analyze here ``Conjugate Gradient applied the the Normal Equations''
(CGNE); see \cite{Han95}, Section~2.3. Computationally, the two methods are very similar; the main reason we concentrate
on kernel CGNE rather than the perhaps more natural kernel CGME is technical:
even in the deterministic case, the analysis of CGME presents significantly more
technical difficulties (\citealp{Han95}, Chap.~4).

The results presented here are an extended version of a prior conference paper
\citep{BlaKra11}. There, a first result was obtained
directly based on \citeauthor{Nem86}'s theorem in the deterministic case: by controlling (via a
simple concentration inequality), with high
probability, the norm of the errors (on the kernel covariance as well as on the data),
it was possible to plug these deterministic estimates directly into
\citeauthor{Nem86}'s theorem, resulting in a bound holding with large probability. However, it was not possible to capture in this way
the ``fast convergence rate'' behavior
related to an assumed polynomial decay of the kernel operator's spectrum, a
phenomenon that is specific to the stochastic setting and the object of much attention
in the recent years (often under the name ``adaptation to the intrinsic data
dimensionality''). For reasons of readability, in the present version
we decided to skip this first suboptimal
(but easy to obtain via \citeauthor{Nem86}'s theorem) result, to concentrate on the
improved fast rate results. The proof of those follows closely the general structure
of \citeauthor{Nem86}'s argument and ideas, but requires a complete reworking in the details
due to the additional difficulties arising from taking into account the behavior of
the operator's spectrum. Furthermore, applying \citeauthor{Nem86}'s result also required
to assume $f^* \in \cH$\,, while the case $f^* \not\in \cH$ (called ``outer case'' below)
also introduces additional difficulties.
The present version
extends the scope of the original conference version
by also including convergence results not only in the prediction (or $\mathcal{L}^2(\nux)$) norm,
but in stronger norms as well, including the Hilbert $\cH$-norm when applicable.

\section{Mathematical framework}

\subsection{Reproducing kernel Hilbert spaces}

We assume the reader familiar with the formalism of reproducing kernel Hilbert spaces (RKHS)
and refer her for instance to \cite{CriSha04} and \cite{SteChr08} for more details. We
recall briefly a few key points. Given $k: \cX^2 \rightarrow \mbr$ a real, symmetric and
semi-definite positive kernel on $\cX$\,, the unique RKHS associated to $k$ is denoted
by $\cH$\,. We recall that $\cH$ is a Hilbert space of real-valued functions on $\cX$
containing the functions $k_x=k(x,.):= \paren{ y \in \cX \mapsto k(x,y)} $ for all $x\in \cX$
and satisfying the characteristic {\em self-reproducing} property $\inner{k_x,f}_{\cH} = f(x)$\,,
for all $f\in \cH, x\in \cX$\,. In the rest of this paper we will make the assumption that

\bigskip

{\bf (A)} $k$ is measurable, for all $x \in \cX$ it holds $k(x,x)\leq \kappa$\,, where $\kappa$ is
a real constant. The Hilbert space $\cH$ is assumed to be separable.

\bigskip

Assumption {\bf (A)} implies that the kernel integral operator
\begin{equation}
\label{eq:kernopl2}
K: \mathcal{L}^2(\nux) \rightarrow \mathcal{L}^2(\nux),\,g \mapsto \int k(.,x) g(x) dP(x)\,,
\end{equation}
is a well-defined, self-adjoint, Hilbert-Schmidt (and even trace-class) operator. 
We measure {\em regularity} of the target function $\fst$
in terms of a {\em source condition} with respect to $K$ and parameters $\rho>0,r>0$\,, 
defined as follows: 
\[
\text{\bf SC($r,\rho$)}: \text{ there exists } u \in \mathcal{L}^2(\nux) \text{ such that }
f^* = K^r u \qquad \text{ with } \qquad \norm{u}_{2,\nu} \leq \kappa^{-r}\rho.
\]
Clearly, in the above condition we can assume that $u \in \mathrm{Ker}(K)^\perp = \ol{\mathrm{Im}(K)}$ without loss of generality.
It is well-known that if $r \geq 1/2$\,,
then $f^*$ coincides almost surely with a function belonging to $\cH$\,, while
for $r< 1/2$ this is not the case. We
refer to $r\geq 1/2$ as the ``inner case'' and to $r<1/2$ as the ``outer case''.

The regularity of the kernel operator $K$ with respect to the marginal
distribution $\nux$ is measured in terms of the so-called {\bf effective dimensionality} condition.
We define the auxiliary notation $\cN(\lambda):=\tr(K(K+\lambda I)^{-1})$\,.
Given two parameters $s\in(0,1)$\,, $D\geq 1$\,, introduce the condition 
\[
\text{\bf ED($s,D$)}:  \cN(\lambda) \leq D^2 (\kappa^{-1} \lambda)^{-s}
\text{ for all } \lambda \in (0,1].
\]
This notion was first introduced by \citet{Zha05} in a learning context,
and used in a number of works since. It is related to the decay rate
of the (ordered) eigenvalues $(\xi_i)_{i\geq 1}$ of $K$\,: if those
satisfy $\xi_i \leq C i^{-1/s}$\, for some constant $C$\,, then  {\bf ED($s,D$)} is satisfied for
an appropriate constant $D$\,. On the other hand, 
under the double-sided condition $c i^{-1/s} \leq \xi_i \leq C i^{-1/s}$\,, lower bounds on
the minimax convergence rates
for the model defined by the source conditions {\bf SC($r,\rho$)} are known 
to be $\mtc{O}(n^{-2r/(2r+s)})$ for the $\cL^2(\nu)$ error \citep{CapDe-07},
resp. $\mtc{O}(n^{-(2r-1)/(2r+s)})$ for the $\cH$-norm error, assuming $r\geq 1/2$
\citep{BlaMuc16}\,.


\subsection{Conditions on the noise}

If $(X,Y) \sim P$\,, denote the noise $\eps:=Y-\e{Y|X}$\,.
We will consider -- depending on the result -- one of the following
assumptions:
\vspace{-0.2cm}
\begin{list}{}{\setlength{\leftmargin}{0cm}}
\item  {\bf (Bounded)} (Bounded $Y$): $|Y| \leq M$ almost surely.
\item {\bf (Bernstein)} (Bernstein condition): $\e{\eps^p | X } \leq (1/2) p! M^p$ almost surely, 
for all integers $p\geq 2$\,.
\end{list}
\vspace{-0.2cm}
The second assumption is weaker than the first. In particular, the
first assumption implies that not only the noise, but also the target function $f^*$ is
bounded in supremum norm, while the second assumption does not put any additional
restriction on the target function.

\section{Convergence rates}

We now introduce the early stopping rule, which takes the form of
a so-called {\bf discrepancy stopping rule:} for some 
threshold $\Omega>0$ to be specified, 
define the (data-dependent) stopping iteration $\wh{m}$ as
the first iteration $m\geq 0$ (with the convention $\wh{\alpha}_0=0$) 
for which
\begin{equation}
\norm{\Y - K_n \alm  }_{K_n} < \Omega\,.
\end{equation}
As mentioned earlier, it holds at the $n$-th iteration that $\wh{\alpha}_n = K_n^{\dagger}\Y$\,,
so that $\norm{\Y - K_n \wh{\alpha}_n  }_{K_n}=0$\,;
therefore the above stopping rule is well-defined and such that $\wh{m}\leq n$\,.
In this section, we assume that the parameters $r$ and $s$ appearing
in conditions {\bf (SC)} and {\bf (ED)} are known a priori to the user,
so that they can be used in the definition of the stopping rule.

Our first result concerns the ``inner regularity'' case ($r\geq 1/2$\,,
so that the target function $\fst$ coincides a.s. with a function belonging to $\cH$),

\begin{theorem}\label{thm:inner}
For some constant $\tau'>3/2$ and $1>\gamma>0$\,, 
consider the discrepancy stopping rule with the threshold
\begin{equation}
\label{eq:disc2}
\Omega = \tau' M \sqrt{\kappa} \paren{\frac{4D}{\sqrt{n}} \log \frac{6}{\gamma}}^{\frac{2r+1}{2r+s}}\,.
\end{equation}

Suppose that the noise fulfills the Bernstein assumption {\bf (Bernstein)},
that the source condition {\bf SC}($r,\rho$) holds for $r\geq 1/2$\,,
and that {\bf ED($s,D$)} holds. Finally, assume $n$ is large enough so that
\begin{equation}
  \label{eq:condninner}
n \geq 16 D^2 \log^2 \left(6/\gamma\right)\,.
\end{equation}

Then with probability $1-\gamma$\,, the estimator $\wh{f} = f_{\wh{\alpha}_\wh{m}}$
obtained by the discrepancy stopping rule \eqref{eq:disc2}
satisfies for any $\theta\in [0,\frac{1}{2}]$\,:
\[
\norm{K^{-\theta}(\wh{f} - f^*)}_{2,\nux} \leq
c(r,\tau)
(\rho+M) \kappa^{-\theta} \paren{\frac{4D}{\sqrt{n}}\log \frac{6}{\gamma}}^{\frac{2(r-\theta)}{2r+s}}\,.
\]
\end{theorem}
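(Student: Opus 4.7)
My plan is to recast the problem operator-theoretically on the RKHS side and split the analysis into a stochastic block that controls the fluctuations of the empirical covariance operator and the empirical score, and a deterministic block that adapts the Nemirovski-style CG argument to the fast-rate regime dictated by \textbf{ED($s,D$)}. For the setup, I introduce the sample covariance operator $T_n := \frac{1}{n}\sum_{i=1}^n k_{X_i}\otimes k_{X_i}$ on $\cH$, its population counterpart $T := \e{k_X \otimes k_X}$ (unitarily equivalent to $K$ on $\overline{\mathrm{Im}(K)}$), and the empirical score $g_n := \frac{1}{n}\sum_{i=1}^n Y_i k_{X_i} \in \cH$. The kernel trick rewrites the CG iterate as $\wh{f}_m = q_m(T_n) g_n$ with $q_m \in \cP_{m-1}$ data-dependent, and the stopping residual $\norm{\Y - K_n \alm}_{K_n}$ equals the Hilbert-space quantity $\norm{T_n^{1/2}(g_n - T_n \wh{f}_m)}_\cH$. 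The whole theorem then becomes an operator-theoretic statement involving only $T_n$, $T$, $g_n$ and $f^*$.

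For the stochastic block, I fix the regularization scale $\lambda_* := \kappa(4D\log(6/\gamma)/\sqrt{n})^{2/(2r+s)}$ read off from the discrepancy threshold, and assemble on a single event of probability at least $1-\gamma$ three ingredients. First, a Bernstein-in-Hilbert-space concentration yields
\[
\norm{(T+\lambda_* I)^{-1/2}(g_n - T_n f^*)}_\cH \leq c_1 M \sqrt{\cN(\lambda_*)/n}\,,
\]
which under \textbf{ED($s,D$)} is of order $\Omega/\sqrt{\kappa}$. Second, a self-adjoint Hilbert-Schmidt Bernstein argument gives
\[
\norm{(T+\lambda_* I)^{-1/2}(T_n - T)(T+\lambda_* I)^{-1/2}} \leq \tfrac{1}{2}\,.
\]
These in turn imply the norm-transfer bounds $\norm{(T+\lambda_* I)^{1/2}(T_n+\lambda_* I)^{-1/2}} \leq \sqrt{2}$ and its symmetric version, allowing near-lossless passage between sample- and population-weighted norms. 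Condition \eqref{eq:condninner} is exactly what is needed so all three statements hold simultaneously, and the calibration of $\Omega$ ensures that the stochastic noise is dominated by $\Omega/2$ in the sample norm on this event.

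The deterministic block, which is the main technical obstacle, propagates the source condition $f^* = K^r u$ through the CG iteration. The classical Nemirovski argument exploits the defining optimality of $q_m$ over $\cP_{m-1}$ together with the orthogonality of successive CG residuals in the $T_n$-inner product to produce bounds on $\norm{T_n^{1/2}(\wh{f}_{\wh{m}} - f^*)}_\cH$ via polynomial approximation of $x\mapsto x^r$ on $[0,\kappa]$. To reach the fast exponent $(r-\theta)/(2r+s)$, rather than the slow $(r-\theta)/(2r+1)$ one obtains from a direct use of Nemirovski's theorem (as in the conference version), these polynomial estimates must be weighted by the regularizer $(T_n+\lambda_* I)$: stopping at residual level $\Omega$ forces $q_{\wh{m}}$ to mimic Tikhonov-at-scale-$\lambda_*$ on the low-frequency part of the spectrum, while \textbf{ED($s,D$)} controls the high-frequency contribution $t\leq \lambda_*$. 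Combined with the CG orthogonality, this delivers the key sample-side estimate
\[
\norm{(T_n+\lambda_* I)^{-\theta}(\wh{f}_{\wh{m}} - f^*)}_\cH \leq c_2(r,\tau')(\rho+M)\lambda_*^{r-\theta}\,,
\]
uniformly in $\theta\in[0,1/2]$. Applying the norm-transfer inequalities from the stochastic block and substituting the value of $\lambda_*$ then yields the claimed bound on $\norm{K^{-\theta}(\wh{f} - f^*)}_{2,\nux}$.
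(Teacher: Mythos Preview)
Your high-level strategy---operator-theoretic reformulation, a stochastic block at scale $\lambda_*=\kappa(4D\log(6/\gamma)/\sqrt{n})^{2/(2r+s)}$, then a deterministic Nemirovski-type block---matches the paper's, and the stochastic part is essentially right. (One slip: in your notation the stopping residual $\norm{\Y-K_n\alm}_{K_n}$ equals $\norm{g_n-T_n\wh f_m}_\cH$, not $\norm{T_n^{1/2}(g_n-T_n\wh f_m)}_\cH$; the extra half-power would miscalibrate $\Omega$.)

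The genuine gap is in the deterministic block. Your heuristic ``stopping at level $\Omega$ forces $q_{\wh m}$ to mimic Tikhonov at scale $\lambda_*$'' is the right intuition, but you give no mechanism to realize it, and this is exactly where the work is. The discrepancy rule tells you only that the \emph{residual} is small at $\wh m$ and large at $\wh m-1$; CG optimality is also a statement about the residual norm. Neither controls the \emph{error} in a differently-weighted norm. The missing quantity is $|p'_{\wh m}(0)|$, the derivative at $0$ of the residual polynomial $p_m(x)=1-xq_m(x)$: since $|q_m(x)|\le |p'_m(0)|$ on $[0,x_{1,m}]$, this is the effective inverse regularization parameter of the CG iterate, and the whole proof hinges on showing $|p'_{\wh m}(0)|\lesssim \lambda_*^{-1}$. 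The paper obtains this in two non-obvious steps: first, the \emph{large} residual at $\wh m-1$, combined with an upper bound on the residual built from Nemirovski's auxiliary function $\varphi_m$, yields $|p'_{\wh m-1}(0)|\lesssim \lambda_*^{-1}$; second, this is transferred from $\wh m-1$ to $\wh m$ via the Christoffel--Darboux identity and a \emph{second} family of orthogonal polynomials $p_m^{(2)}$ (orthogonal for the shifted product $[\cdot,\cdot]_{(2)}$). Only with $|p'_{\wh m}(0)|$ in hand can one split the error via spectral projections $F_\eps$ of $T_n$ and bound the low- and high-frequency pieces separately. Nothing in your sketch---``CG orthogonality'', ``polynomial approximation of $x\mapsto x^r$''---substitutes for this chain, and in particular the passage $\wh m-1\to\wh m$ is not covered.
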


Observe that in the above result, taking $\theta=\frac{1}{2}$ results in
a convergence rate result in the $\cH$-norm: this is because for a function
$g$ that coincides a.s. with a function $g_{\cH}\in \cH$, it holds
$ \big\|K^{-\frac{1}{2}}g\big\|_{2,\nux} = \norm{g_{\cH}}_{\cH}\,$ (see Section~\ref{eq:proofsetup} for details). Thus
we obtain (simultaneous) convergence rate results for the $\mathcal{L}^2(\nux)$-norm,
the $\cH$-norm as well as all intermediate norms.

\newcommand{\wtn}{\wt{n}}

We now turn to the ``outer rate'' case ($r<\frac{1}{2}$). 
In this situation, following an idea used by \cite{Cap10},
we make the additional assumption that
{\em unlabeled} data is available. Assume that we have $\wtn$ i.i.d. observations $X_1,\ldots,X_{\tilde n}$\,, out of which only the first $n$ are labeled. We define a new response vector $\wY=\frac{\tilde n}{n} \left(Y_1,\ldots,Y_n,0,\ldots,0\right)\in \mathbb{R}^{\tilde n}$ and run the CG algorithm \ref{algo:cg} on $X_1,\ldots,X_{\tilde n}$ and $\wY$\,. We use 
the stopping rule with the following threshold:
\begin{equation}
  \label{eq:outthr}
  \Omega =  \tau' \max(\rho,M) \sqrt{\kappa} \paren{\frac{4D}{\sqrt{n}} \log \frac{6}{\gamma}}^{\frac{2r+1}{2r+s}}\,.
\end{equation}
Observe that it is similar to \eqref{eq:disc2} in the previous section, except the factor
$M$ is replaced by $\max(M,\rho)$ (and the numerical constants are different).
\begin{theorem}\label{thm:outer}
  For some constant $\tau'>6$\,, and $\gamma \in (0,1)$\,,
  consider the discrepancy stopping rule with the fixed threshold given by \eqref{eq:outthr}\,.

Suppose assumptions {\bf (Bounded)},
{\bf SC($r,\rho$)} and  {\bf ED($s,D$)}, are granted with $r<\frac{1}{2}$ and
$r+s \geq \frac{1}{2}$\,. Assume $n$ is large enough so that
\begin{equation}
  \label{eq:condnouter}
n \geq 16 D^2 \log^2 \left(4/\gamma\right)\,,
\end{equation}
and that additional unlabeled data is available with $\wt{n} \geq n^{\frac{1+s}{2r+s}}\,.$
Then with probability $1-\gamma$\,,the estimator $\wh{f}$
obtained by the discrepancy stopping rule defined above satisfies
for any $\theta\in[0,r)$\,:
\[
\norm{K^{-\theta}(\wh{f}-f^*)}_{2,\nu} \leq 
c(r,\tau)
(\rho+M) \kappa^{-\theta} 
\paren{\frac{4D}{\sqrt{n}}\log\frac{4}{\gamma}}^{\frac{2(r-\theta)}{2r+s}}\,.
\]
\end{theorem}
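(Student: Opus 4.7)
The guiding idea is to reduce the outer case to an ``extended-design inner case'': since CG runs on the $\wtn$-point Gram matrix $K_{\wtn}$, the associated empirical covariance on $\cH$ approximates the population integral operator at rate $\wtn^{-1/2}$, while the rescaled response $\wY$ still has the correct mean and carries label-noise variance coming only from the $n$ actually labelled samples. If the CG analysis used for Theorem~\ref{thm:inner}, applied on the extended design to a suitable $\cH$-valued proxy $f^\star$ of $\fst$, controls $\wh f - f^\star$ in the right operator norm, then the condition $\wtn \geq n^{(1+s)/(2r+s)}$ will ensure that the additional bias $f^\star - \fst$ matches the target rate.

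Concretely, let $S_{\wtn}\colon \cH \to \mbr^{\wtn}$ denote the sampling operator $(S_{\wtn} f)_i = f(X_i)$ (with $\wtn^{-1}$-rescaled Euclidean norm on the target), so that $K_{\wtn} = S_{\wtn} S_{\wtn}^*$ and $T_{\wtn} := S_{\wtn}^* S_{\wtn}$. In operator form the CG iterate reads $\wh f = \wt q_{\wh m}(T_{\wtn}) S_{\wtn}^*\wY$ for some data-dependent polynomial $\wt q_{\wh m}$ of degree less than $\wh m$, and I would split the error as
\[
\wh f - \fst = \underbrace{\wt q_{\wh m}(T_{\wtn}) \brac{S_{\wtn}^*\wY - T_{\wtn} f^\star}}_{\text{(I) noise}} + \underbrace{\brac{\wt q_{\wh m}(T_{\wtn}) T_{\wtn} - I} f^\star}_{\text{(II) CG residual}} + \underbrace{f^\star - \fst}_{\text{(III) proxy bias}},
\]
where $f^\star$ is an $\cH$-valued surrogate for $\fst$, for instance a spectral truncation $P_{\geq \lambda_\circ}\fst$ at a threshold $\lambda_\circ$ calibrated to the target rate, or a Tikhonov-smoothed version $K(K+\lambda_\circ)^{-1}\fst$.

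For (I), a Hilbert-space Bernstein inequality separates the label-noise contribution (variance $\mtc{O}(n^{-1})$) from the design-deviation contribution $(T_{\wtn}-T)f^\star$ (variance $\mtc{O}(\wtn^{-1})$). For (II), I would follow the Nemirovski--Hanke argument used in the proof of Theorem~\ref{thm:inner}: the discrepancy threshold \eqref{eq:outthr} calibrates $\wh m$ so that the CG residual polynomial acting on $f^\star$, combined with \textbf{ED}($s,D$), yields the target rate $(D/\sqrt n)^{2(r-\theta)/(2r+s)}$. Transferring from the $T_{\wtn}$-native norm back to $\|K^{-\theta}(\cdot)\|_{2,\nux}$ relies on Heinz-type operator-monotonicity estimates together with $\|T_{\wtn} - T\|_{\mathrm{HS}} = \mtc{O}(\wtn^{-1/2})$; the restriction $\theta < r$ enters precisely here, as the interpolation bound breaks down at $\theta = r$.

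The main obstacle is (III) combined with the nonlinearity of CG: since $r < 1/2$, no element of $\cH$ equals $\fst$ almost surely, so any surrogate $f^\star$ contributes a systematic bias that the CG polynomial cannot absorb for free. Resolving this requires choosing $f^\star$ delicately --- close enough to $\fst$ in the target $K^{-\theta}$-norm while still admitting a source-type representation through $T_{\wtn}$ --- and then tracking how its error propagates through the data-dependent stopping iteration $\wh m$. The assumption $\wtn \geq n^{(1+s)/(2r+s)}$ is exactly what balances the design and label contributions so that neither dominates the final rate, while $r + s \geq \tfrac{1}{2}$ ensures that the CG polynomial exponents stay in the range where the Nemirovski-type filter bounds are uniformly valid.
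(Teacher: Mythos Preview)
Your plan has the right skeleton---introduce an $\cH$-valued surrogate $f^\star$ for $\fst$, run the inner-case CG machinery on the extended design, and control the residual surrogate bias separately---and this is indeed what the paper does. But the place where you stop (``the main obstacle'') is precisely where the paper's argument is nontrivial, and your concrete suggestions would make it harder rather than easier.

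The paper's choice of surrogate is different from yours and this matters. You propose a \emph{population}-based proxy such as $K(K+\lambda_\circ)^{-1}\fst$; the paper instead takes the \emph{empirical} Tikhonov proxy $f^\lambda_\cH:=(S_{\wtn}+\lambda I)^{-1}T^*\fst$ (and, as intermediate, $\wt f_m := q_m(S_{\wtn})T^*\fst$). Because the CG polynomial $q_m$ is built from $S_{\wtn}$, an empirical proxy keeps the ``CG residual'' piece algebraically clean: one gets
\[
\wt f_m - f^\lambda_\cH = \bigl(p_m(S_{\wtn})+\lambda q_m(S_{\wtn})\bigr)(S_{\wtn}+\lambda I)^{-1}T^*\fst,
\]
everything commutes, and a single application of the transfer bound~\eqref{eq:boundin2} suffices. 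With a population proxy, the analogous term mixes $S$ and $S_{\wtn}$ and you must thread an extra operator comparison through the $F_\eps/F_\eps^\perp$ split---not impossible, but it is exactly the difficulty you flag without resolving.

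Your bookkeeping in (I) is also incomplete. In the paper's notation,
\[
T_{\wtn}^*\wY - S_{\wtn}f^\star = (T_{n}^*\Y - T^*\fst) + T^*(\fst - Tf^\star) + (S-S_{\wtn})f^\star.
\]
You name the first and third pieces (label noise and design deviation), but the middle one is a deterministic bias living on the low-eigenvalue part of the spectrum, and it gets multiplied by $q_m(S_{\wtn})$, whose size there is $\abs{p'_m(0)}$. Controlling it therefore requires the very link between $\abs{p'_{\wh m}(0)}$ and $\lambda_*^{-1}$ that drives the whole three-step argument (Lemmas~\ref{le:lemma1o}, \ref{le:relpolo} and relation~\eqref{eq:pcompar}); it cannot be absorbed into a Bernstein inequality.

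Two smaller corrections. The constraint $\wtn\ge n^{(1+s)/(2r+s)}$ is not primarily about balancing label versus design variance; its role is to guarantee $\norm{(S+\lambda_* I)^{-1}(S_{\wtn}-S)}\le 3/4$ at the target $\lambda_*$, without which the basic transfer $\norm{(S+\lambda_* I)(S_{\wtn}+\lambda_* I)^{-1}}\le \Lambda^2$ fails and none of the operator-comparison bounds~\eqref{eq:boundin0}--\eqref{eq:boundin2} are available. And $r+s\ge\tfrac12$ has nothing to do with the Nemirovski filter bounds~\eqref{eq:boundphi} (those hold for all $\nu\ge 0$); it is needed so that the second-order Bernstein term $\sqrt{\kappa}/(\sqrt{\lambda_*}\,n)$ in~\eqref{eq:byby} is dominated by the leading $\sqrt{\cN(\lambda_*)/n}$ term, i.e.\ so that $\blambda_*^{\,r+s-1/2}\le 1$.
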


In the outer case, since $f^* \not\in \cH$ we of course cannot expect any convergence
in $\cH$-norm, but as is clear from the above result, we obtain convergence
rate results in norms that are stronger than the $\mathcal{L}^2(\nu)$-norm, with the meaningful range
of $\theta$ (determining the strength of the norm) determined by the source condition
parameter $r$\,.

%

\section{Discussion}

{\em Rate quasi-optimality.}  Convergence rates are generally stated in expectation,
while the convergence results of Theorems~\ref{thm:inner} and~\ref{thm:outer} are stated
with high probability. Usually, exponential deviation bounds can be integrated
to yield bounds in expectation; unfortunately, this is not directly possible here,
because (a) conditions \eqref{eq:condninner}
(resp. \eqref{eq:condnouter}) introduce a constraint between the number of examples $n$
and the probability $\gamma$ that the bound fails, preventing a statement about
extreme (exponentially small in $n$) quantiles of the error; and even 
more importantly (b) the threshold $\Omega$
for the stopping criterion itself depends on the prescribed bound failure probability $\gamma$\,.
For the present discussion, we therefore consider the following slightly weaker notion considered by \citet{CapDe-07}: we call a positive sequence $(a_{n,\theta})_{n\geq 1}$ an upper rate of convergence 
in probability for the sequence of 
estimators $\wh{f}_n$ over a class of distributions $\mtc{P}$ if
\begin{equation}
\label{eq:ratesprob}
\lim_{C \rightarrow \infty} \limsup_{n\rightarrow \infty} \sup_{P \in \cP}
\probb{(X_i,Y_i)_{i=1}^n \stackrel{\mathrm i.i.d.}{\sim} P}
{\norm{K^{-\theta}(\wh{f}_n-f_{P}^*)}_{2,\nu} > C a_{n,\theta}} =0\,,
\end{equation}
On the other hand, for the class of distributions defined by the source condition {\bf SC($r,\rho$)}
and the  polynomial decay condition
$c i^{-1/s} \leq \xi_i \leq C i^{-1/s}$\, for the eigenvalues $(\xi_i)_{i\geq 1}$ of the kernel integral
operator $K$,
the convergence rate $a^*_{n,\theta} := n^{-\frac{r-\theta}{2r+s}}$
is minimax optimal 
(for lower bounds on attainable rates see, for instance, \citealp{CapDe-07} for the case $\theta = 0$\, and
\citealp{BlaMuc16} for the case $r\geq \frac{1}{2}, \theta \in [0,\frac{1}{2}]$\,.)

We can thus conclude that kernel CG enjoys quasi-optimal statistical rates of
convergence, in the sense that any sequence  $a_{n,\theta}$
such that $a^*_{n,\theta}=o(a_{n,\theta})$ can be an upper rate of convergence, 
provided the stopping iteration is chosen appropriately.
Namely, we can choose the sequence of bound failure probabilities $(\gamma_n)_{n\geq 1}$
converging to $0$ arbitrarily slowly, so that the rate obtained using the corresponding sequence $\Omega_n$
in Theorems~\ref{thm:inner} or~\ref{thm:outer} yield \eqref{eq:ratesprob}
with a rate $a_{n,\theta} = a^*_{n,\theta} c_n$ where $c_n$ tends arbitrarily slowly to $\infty$.


{\em Comparison of methods.} It is known from previous works that
a large family of spectral linearization methods (see in particular
\citealp{CapDe-07} for Tikhonov regularization, and \citealp{Cap10,BlaMuc16} for
the general case) achieve minimax optimal convergence rates in the setting
considered here. It is therefore a legitimate question whether the additional
technicality for analyzing CG is justified, given the avaibility of other methods.
The main reason is that CG remains an algorithm of choice because of its excellent
computational properties. Because it very agressively aims at reducing
the residual error, it is often observed in practice that CG converges in much fewer
iterations than other methods (such as regular gradient descent, studied in
the kernel learning setting by \citealp{YaoRosCap07}). For this reason it would
be of interest to analyze the convergence rate of kernel PLS as well, which,
as already mentioned in the introduction, is computationally very similar to kernel CG
but more challenging to study theoretically.

{\em Adaptivity.} The quasi-optimal convergence rates obtained in this work use a stopping
criterion with a threshold depending on the parameters $r$ and $s$\, (and on $\rho$
in the outer case). It is unrealistic to assume these various regularity parameters
to be known in advance in practice. The question of automatic choice of a stopping
iteration without prior knowledge of these parameters is known as that of
adaptivity. For what concerns the convergence in excess prediction error
$\mathcal{L}^2(\nux)$\,, and under the assumption {\bf (Bounded)}: $|Y|\leq M$\,,
it is well known that a simple hold-out strategy (i.e. choosing, amongst
a family of candidate estimators,
the one achieving minimal error on an held-out validation sample),
performed after trimming all candidate estimators $\wh{f}_m$ to the interval $[-M,M]$\,,
generally speaking selects an estimator close to the best between those considered.
In the present context, each iteration $m$ provides such a candidate estimate; 
one could for example adapt the corresponding arguments from \citet{Cap10}\,;
see also \cite{BlaMas06} for a general point of view on this question.
As a consequence, since the results established here grant the {\em existence} of
an iteration with quasi-optimal rate, this will also be the case for the
adaptive hold-out strategy. It remains an open question whether this also applies
to the error measured in stronger norms: although we established that there
exists an iteration with optimal rates for all norms, it does not follow that
any iteration which is good in the sense of excess prediction (one of which
hold-out would select) would automatically also yield good performance for the
stronger norms.

\section{Proofs}

The proof of our results rely on combining ideas of \cite{Han95} 
(expanding upon the remarkable seminal work of \citealp{Nem86}), 
used in the analysis of convergence of CG algorithms for (deterministic) inverse problems,
with tools introduced by \cite{Vitetal06},
\cite{CapDe-07}, \cite{smalezhou2}, \cite{Cap10} for the analysis of inverse problem methods
for the statistical learning setup. We start by gathering in the two next sections
the required notation and previous results that we will make use of, before getting to the proof
itself. In all the proofs, we  use the notation $c(a,b)$ to denote a nonrandom function only depending
on the nonrandom parameters $a,b$\,, and whose exact value can change from line to line.

\subsection{Setup and key tools for statistical learning as an inverse problem}
\label{eq:proofsetup}

We first define the {\em empirical evaluation operator} $T_n$ as follows:
\[
T_n: \qquad g \in \cH \mapsto T_ng :=(g(X_1),\ldots,g(X_n))^\top \in \mbr^n
\]
and the {\em empirical integral operator} $T_n^*$ as:
\[
T_n^*:  u=(u_1,\ldots,u_n) \in \mbr^n \mapsto
T_n^*u := \frac{1}{n} \sum_{i=1}^n  u_i k(X_i,\cdot) \in \cH.
\]
Using the reproducing property of the kernel, it can be readily
checked that $T_n$ and $T_n^*$ are adjoint operators, i.e. they satisfy
$\inner{T^*_n u,g}_{\cH} = \inner{u,T_ng}$\,, for all $u \in \mbr^n, g\in \cH$\,. 
With this notation, it is clear that if $\wh{\alpha} \in \mbr^n$ is the
vector of coefficients in the normalized kernel expansion \eqref{eq:kernexp}
of a kernel estimator $\wh{f}$\,, then if holds $\wh{f} = T_n^* \wh{\alpha}$\,.
Furthermore,  since $K_n=T_nT_n^*$\,, we have for any $u\in\mbr^n$\,:
\[
\norm{u}^2_{K_n} = \inner{u,K_n u} = \norm{T_n^* u}^2_{\cH}\,.
\]
Based on these facts, equation \eqref{eq:crit_cg} can be rewritten as
\[
\alm= \text{arg}\min_{\alpha \in \cK_m(\Y,K_n)} \| T_n^* \Y - T_n^* T_n T_n^* \alpha\|_{\cH}\,,
\]
implying that for the $m$-th iteration estimator $\fm = T_n^* \alm$\,, it holds
\begin{equation}
\label{eq:crit_cg2}
f_{m}= \text{arg}\min_{f \in \cK_m(T_n^*\Y,S_n)} \| T_n^*\Y - S_n f\|_{\cH}\,,
\end{equation}
where $S_n=T_n^*T_n$ is a self-adjoint operator of $\cH$\,, called empirical covariance
operator. In the sequel we will mainly refer to \eqref{eq:crit_cg2} as the characterization of
the CG method.
In fact, \eqref{eq:crit_cg2} corresponds to the definition of the ``usual'' conjugate 
gradient algorithm (in Hilbert space), formally applied to the so-called normal equation (in $\cH$)
\begin{equation}
\label{eq:normaleq}
S_n \wh{f} = T_n^* \Y\,,
\end{equation}
which is obtained from \eqref{eq:kernel_ls} by left multiplication by $T_n^*$\,. 

The advantage of this reformulation, and an idea first introduced by
\cite{Vitetal06}, is that it can be interpreted as a perturbation
of a {\em population, noiseless} version (of the equation and of the algorithm), wherein
$\Y$ is replaced by the target function $\fst$ and the empirical operators $T_n^*,T_n$ are
respectively replaced by their population analogues, the kernel integral operator
\[
T^* : g \in \mathcal{L}^2(\nux) \mapsto T^*g := \int k(x,.) g(x) d\nux(x) = \e{k(X,\cdot) g(X)} \in \cH\,,
\]
and the change-of-space (or inclusion) operator
\[
T: \qquad g \in \cH \mapsto g \in \mathcal{L}^2(\nux)\,.
\]
The latter maps a function to itself
but between two Hilbert spaces which differ with respect to their geometry -- the inner
product of $\cH$ being defined by the kernel function $k$\,, while the
inner product of $\mathcal{L}^2(\nux)$ depends on the data generating
distribution. This operator is well defined:
since the kernel is bounded, all functions in $\cH$
are bounded and therefore square integrable under any distribution $\nux$\,; this
also implies that $T^*$ is well-defined. Again,
it can be checked due to the reproducing property that $T,T^*$ are adjoint of each other;
we denote $S:= T^* T$ the population covariance operator, and observe that $K=TT^*$ holds,
where $K$ is the operator defined by \eqref{eq:kernopl2}. Finally, it holds that
$S^{-\frac{1}{2}}T^*$ is a partial isometry from $\mathcal{L}^2(\nux)$ to $\cH$\,,
and $K^{-\frac{1}{2}}T$ a partial isometry from $\cH$ to $\mathcal{L}^2(\nux)$\,.
In particular, if $g\in \mathcal{L}^2(\nux)$ coincides a.s. with some function $g_\cH \in \cH$\,,
then it holds $g=Tg_\cH$ with
$\norm{g_{\cH}}_\cH = \big\|K^{-\frac{1}{2}}Tg_{\cH}\big\|_{2,\nux} = \big\|K^{-\frac{1}{2}}g\big\|_{2,\nux}$\,.

The next lemma was established by \citet{CapDe-07}, based on a
Bernstein-type inequality for random variables taking values in a Hilbert space,
see \cite{PinSak85,Yur95}. It bounds with high probability the deviations between the 
quantities in the normal equations \eqref{eq:normaleq} and their population counterparts.
A key insight from \citet{CapDe-07} is that in order to obtain sharp bounds on convergence
rates, these deviations should be measured in a ``warped'' norm rather than in the standard norm:
\begin{lemma}
\label{prop:relconc}
Let $\lambda$ be a positive number. Under assumption {\bf (Bounded)}, the following holds:
\begin{equation}
\label{eq:byby}
\prob{\norm{(S+\lambda I)^{-\frac{1}{2}}(T_n^* \Y - T^*f^*)}_{\cH} \leq
2M\paren{\sqrt{\frac{\cN(\lambda)}{n}} + \frac{2\sqrt{\kappa}}{\sqrt{\lambda}n}}
\log \frac{2}{\gamma} }
\geq 1-\gamma\,.
\end{equation}
If the representation $f^*=Tf^*_{\cH}$ holds and under assumption {\bf (Bernstein)},
we have the following:
\begin{equation}
\label{eq:bybn}
\prob{\norm{(S+\lambda I)^{-\frac{1}{2}}(T_n^* \Y - S_n f^*_{\cH})}_{\cH} \leq
2M\paren{\sqrt{\frac{\cN(\lambda)}{n}} + \frac{2\sqrt{\kappa}}{\sqrt{\lambda}n}}
\log \frac{2}{\gamma}}
\geq 1-\gamma\,.
\end{equation}
Concerning the convergence of empirical covariance in the sense of operators, the following holds:
\begin{equation}
\label{eq:reloperror}
\prob{\norm{(S+\lambda I)^{-\frac{1}{2}}(S_n-S)}_{HS} \leq
2\sqrt{\kappa} \paren{ \sqrt{\frac{\cN(\lambda)}{ n}} + \frac{2\sqrt{\kappa}}{\sqrt{\lambda} n}}
\log \frac{2}{\gamma} }
\geq 1-\gamma\,,
\end{equation}
as well as:
\begin{equation}
\label{eq:hoeffop}
\prob{\norm{S_n-S}_{HS} \leq
\frac{4\kappa}{\sqrt{n}}\sqrt{\log \frac{2}{\gamma}} }
\geq 1-\gamma\,,
\end{equation}
where $\norm{.}_{HS}$ denotes the Hilbert-Schmidt norm.
\end{lemma}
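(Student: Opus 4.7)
My plan is to derive all four bounds by applying the Pinelis--Sakhanenko / Yurinsky Bernstein inequality for sums of i.i.d.~Hilbert-space--valued random variables (for inequalities \eqref{eq:byby}--\eqref{eq:reloperror}) and a Hoeffding-type inequality for bounded Hilbert-space--valued variables (for \eqref{eq:hoeffop}). Recall the form of the Bernstein inequality: if $\xi_1,\ldots,\xi_n$ are i.i.d.\ zero-mean variables in a separable Hilbert space with $\mbe\norm{\xi_i}^p\leq \frac{1}{2}p!\,L^{p-2}V$ for all $p\geq 2$, then with probability at least $1-\gamma$,
\[
\norm{\tfrac{1}{n}\sum_{i=1}^n \xi_i} \leq 2\paren{\frac{L}{n} + \sqrt{\frac{V}{n}}}\log\frac{2}{\gamma}.
\]
So the job reduces, in each of \eqref{eq:byby}--\eqref{eq:reloperror}, to identifying the right centered summand $\xi_i$, bounding its essential range $L$, and bounding the variance proxy $V$ by a multiple of $\mtc{N}(\lambda)$.

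For \eqref{eq:byby}, write $T_n^*\Y=\tfrac{1}{n}\sum_i Y_i k(X_i,\cdot)$ and note $\mbe[Y_i k(X_i,\cdot)]=\mbe[f^*(X)k(X,\cdot)]=T^*f^*$, so the centered summand is
$\xi_i=(S+\lambda I)^{-1/2}\!\paren{Y_i k(X_i,\cdot)-T^*f^*}\in\cH$.
Using $\norm{k(X,\cdot)}_\cH\leq\sqrt\kappa$ and $\norm{(S+\lambda I)^{-1/2}}_{op}\leq \lambda^{-1/2}$, together with $|Y|\leq M$ under \textbf{(Bounded)}, gives $\norm{\xi_i}_\cH\leq 2M\sqrt{\kappa/\lambda}$; and the variance is controlled by the key identity
\[
\mbe\inner{(S+\lambda I)^{-1}k(X,\cdot),k(X,\cdot)}_\cH
=\tr\bigl((S+\lambda I)^{-1}S\bigr)=\mtc{N}(\lambda),
\]
yielding $\mbe\norm{\xi_i}^2\leq M^2\mtc{N}(\lambda)$. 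Plugging $L=2M\sqrt{\kappa/\lambda}$ and $V=M^2\mtc{N}(\lambda)$ into Bernstein recovers \eqref{eq:byby}. For \eqref{eq:bybn}, when $f^*=Tf^*_\cH$ the summand becomes $\xi_i=(S+\lambda I)^{-1/2}\eps_i k(X_i,\cdot)$; the \textbf{(Bernstein)} moment condition on $\eps$ factors nicely as
$\mbe\norm{\xi_i}^p\leq \tfrac{1}{2}p!\,M^p\mbe\norm{(S+\lambda I)^{-1/2}k(X,\cdot)}^p\leq \tfrac{1}{2}p! M^p(\kappa/\lambda)^{(p-2)/2}\mtc{N}(\lambda)$, giving the same $L,V$ as before and hence the identical bound.

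For \eqref{eq:reloperror}, work in the Hilbert space of Hilbert--Schmidt operators on $\cH$ with $\xi_i=(S+\lambda I)^{-1/2}\!\bigl(k(X_i,\cdot)\otimes k(X_i,\cdot)-S\bigr)$, whose mean vanishes since $\mbe[k(X,\cdot)\otimes k(X,\cdot)]=S$. The rank-one factorization $\norm{Au\otimes u}_{HS}=\norm{Au}\norm{u}$ yields $\norm{\xi_i}_{HS}\leq 2\kappa/\sqrt\lambda$ and, again via the same trace identity, $\mbe\norm{\xi_i}_{HS}^2\leq \kappa\,\mtc{N}(\lambda)$, so Bernstein with $L=2\kappa/\sqrt\lambda$, $V=\kappa\mtc{N}(\lambda)$ delivers \eqref{eq:reloperror}. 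Finally, \eqref{eq:hoeffop} is the simplest: since $\norm{k(X_i,\cdot)\otimes k(X_i,\cdot)-S}_{HS}\leq 2\kappa$ uniformly, a Hoeffding-type inequality for Hilbert-space valued random variables (or Bernstein dominated by its sub-Gaussian term) yields the $\sqrt{\log(2/\gamma)/n}$ rate with constant $4\kappa$. The only real care needed throughout is the moment bookkeeping — checking that the Pinelis--Sakhanenko hypothesis $\mbe\norm{\xi_i}^p\leq \tfrac{1}{2}p! L^{p-2}V$ follows either from the a.s.~bound on $\xi_i$ (giving a crude version) or, in \eqref{eq:bybn}, from the \textbf{(Bernstein)} assumption via conditioning on $X$; this is the step to present carefully but it involves no analytic surprise.
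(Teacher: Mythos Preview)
Your proposal is correct and follows exactly the approach the paper invokes: the paper does not give its own proof of this lemma but attributes it to Caponnetto and De~Vito (2007), based on the Pinelis--Sakhanenko/Yurinsky Bernstein inequality for Hilbert-space-valued random variables --- precisely the tool you apply, with the same identification of summands and the same trace identity $\mbe\inner{(S+\lambda I)^{-1}k(X,\cdot),k(X,\cdot)}_\cH=\tr\bigl((S+\lambda I)^{-1}S\bigr)=\cN(\lambda)$ controlling the variance. One small bookkeeping slip: in \eqref{eq:bybn} the summand $\xi_i=(S+\lambda I)^{-1/2}\eps_i k(X_i,\cdot)$ is already centered, so you get $L=M\sqrt{\kappa/\lambda}$ rather than $2M\sqrt{\kappa/\lambda}$; this only makes your bound slightly sharper than the stated one, so the claim still holds.
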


\subsection{Key tools for the analysis of CG using orthogonal polynomials theory}

We denote by $(\xini,\eni)_{i\in I}$ (respectively  $(\xi_{i},e_{i})_{i\in I}$)
an eigenvalue-eigenvector orthogonal basis with $\xini,\xi_{i} \in [0,\kappa]$ 
for the operator $S_n$\,, respectively $S$\,. 
Since the rank of $S_n$ is at most $n$\,, the family $(\xini)_{i \in I}$ has at most $n$ nonzero terms
(while the family $(\xi_i)_{i \in I}$ has at most countably many nonzero terms, $S$ being compact).

Using the formalism of functional calculus for operators, if $\phi:[0, \kappa] \rightarrow \mbr$ is
a bounded and measurable function, we denote
\[
\phi(S_n) := \sum_{i\in I} \phi(\xini) \eni e^*_{n,i} \;\; \text{ and }  \;\;
\phi(S) := \sum_{i\in I} \phi(\xi_{i}) e_{i} e^*_{i}\,.
\]
We recall the ``switching'' rule $T\phi(S)=\phi(K)T^*$ since $S=T^*T$\,, $K=TT^*$\,,
which we will be using often.
In the sequel, $\norm{A}$ denotes the operator norm of an operator $A$\,. The above definition implies in particular the bound
\[
\norm{\phi(S_n)} \leq \sup_{t \in [0,\kappa]} \abs{\phi(t)}.
\]
For $u\geq 0$\,, denote $F_{u} = \mbf{1}_{[0,u)}(S_n)$\,, that is,
the orthogonal projector in $\cH$ onto the subspace 
$\mathrm{vect}\set{\eni , i\in I \;\mathrm{ s.t. }\; \xini < u}$ spanned by eigenvectors 
of $S_n$ corresponding to eigenvalues strictly less than $u$\,. Observe that
$F_u$ depends on the data because $S_n$ does, but we omit the index $n$ at this juncture
to simplify notation, and because we will not make use of the corresponding 
notion for $S$\,, so that there is no risk of confusion. Finally, 
for an integer $\ell$ introduce  the
measure
\[
\mu_n^{(\ell)} := \sum_{i\in I} \xini^\ell \inner{T_n^* \Y,\eni}^2 \delta_{\xini},
\]
where $\delta_x$ denotes the Dirac delta-measure at point $x$\,. In particular,
for $\ell=0$ we use the convention $0^0=1$ and it holds for a bounded measurable function 
$\phi: [0, \kappa] \rightarrow \mbr$\,:
\[
\norm{\phi(S_n)T_n^* \Y}^2_{\cH} = \sum_{i\in I} \phi(\xini)^2 \inner{T_n^* \Y,\eni}^2 = 
\int \phi(t)^2 d\mu_n^{(0)}(t)\,.
\]
Observe that, for $i\in I$ such that $\xini=0$\,, we have $\inner{T_n^*\Y,\eni}=\inner{\Y,T_n \eni}=0$
since $\eni \in \mathrm{ker}(T_n^* T_n) = \mathrm{ker}(T_n)$\,. Therefore, the measures $\mu_n^{(\ell)}$
have finite support (independent of $\ell$) 
of cardinality $n_\Y \leq n$\,. In fact $n_\Y$ is the number of distinct positive eigenvalues
of $K_n$ such that $\Y$ has nonzero projection on the corresponding eigenspace
(or equivalently, the number of distinct positive eigenvalues
of $S_n$ such that $T_n^*\Y$ has nonzero projection on the corresponding eigenspace).

With this formalism established, we turn to properties of the CG method
(see, e.g., \citealp{Han95}, and \citealp{Engl96}, Chapter~7).
By its definition, the output of the $m$-th iteration of the CG algorithm
can be put under the form $\fm = q_m(S_n) T_n^* \Y\,,$ where $q_m \in \cP_{m-1}$\,, 
the vector space of real polynomials of degree at most $m-1$\,.
A crucial role is played by the {\em residual polynomial}
\[
p_m(x) = 1 - xq_m(x) \in \cP_{m}^0\,,
\]
where $\cP_{m}^0$ is the affine space of real polynomials of degree no
greater than
$m$ and having constant term equal to~1. In particular
$T_n^*\Y - S_n\fm = p_m(S_n)T_n^*\Y$\,. For $\ell\geq 0$ we define
\begin{align*}
[p,q]_{(\ell)}  := \int_0^{\kappa} p(t)q(t)  d\mu_n^{(\ell)}(t) 
 & = \inner{p(S_n)T_n^*\Y,S_n^\ell q(S_n)T_n^*\Y}\\&  = \sum_{i\geq 1} p(\xini)q(\xini)
\xini^\ell 
\inner{T_n^*\Y,\eni}^2\,.
\end{align*}
Since the measure $\mu_n^{(\ell)}$ has support of cardinality $n_\Y$\,, 
$[.,.]_{(\ell)}$ is a scalar product on the space $\cP_{n_\Y-1}$\,.
Consider an iteration $m < n_\Y$\,. By \eqref{eq:crit_cg2}, $q_m$
is the minimizer of $\norm{(I - S_n q(S_n))T_n^*\Y}^2_\cH$
over $q\in\cP_{m-1}$\,, so that the residual polynomial $p_m$ is equivalently
a minimizer of $\norm{p(S_n)T_n^*\Y}^2_\cH = [p,p]_{(0)}$ over $p \in \cP^0_m $\,.
In other words, $p_m$ is the orthogonal projection  of the origin onto the affine
subspace $\cP^0_m\subset \cP_m$ for the scalar product $[.,.]_{(0)}$\,. This, in passing, 
shows the unicity of $p_m$\,, and by consequence of $q_m$ and $f_m$\,.
In the case $m=0$\,, we set $q_0=0, p_0\equiv 1$\,.

We denote by $\pi$ the shift operation on polynomials with $(\pi q)(x)=xq(x)$\,.
Since $\cP^0_m = 1 + \pi \cP_{m-1}$ is an affine subspace of $\cP_m$ parallel to $\pi\cP_{m-1}$\,,
it follows by the properties of projections that $p_m$ is orthogonal
to $\pi\cP_{m-1}$ for $[.,.]_{(0)}$\,. Thus $0=[p_m,\pi q]_{(0)}=[p_m,q]_{(1)}$ for any $q \in
  \cP_{m-1}$\,; this establishes that $p_0,p_1,\ldots,p_{n_\Y-1}$ is an orthogonal polynomial
sequence with respect to $[.,.]_{(1)}$\,. For $m=n_\Y$\,, this is somewhat of a special case 
since $[.,.]_{(\ell)}$
is only a semidefinite product on $\cP_{n_\Y}$\,. However, it is not difficult to see that
the polynomial $p_{n_\Y}$ having $n_\Y$ distinct roots corresponding to the atoms
of $\mu_n^{(0)}$ and normalized to have constant term equal to 1, is the unique element
of $\cP^0_{n_\Y}$ satisfying $[p_{n_\Y},p_{n_\Y}]_{(0)}=0$\,. Therefore, unicity of the solution
also holds for $m=n_{\Y}$\,, and obviously also $[p_{n_\Y},p_m]_{(1)}=0$ for all $m\leq n_\Y$\,.
The CG method will not in any case go beyond iteration $m=n_\Y$\,, since at this point,
by the above considerations the residual norm is 0 and an exact solution to the equation
$S_n f = T^*_n \Y$ has been reached.

The next lemma gathers the technical results coming from the theory of
orthogonal polynomials needed for our analysis.
\begin{lemma}
\label{lem:orthpol}
Let $m$ be any integer satisfying $1\leq m \leq n_\Y$\,.

i) The polynomial $p_m$ has exactly $m$ distinct 
roots belonging to $(0,\kappa]$\,, denoted by $(x_{k,m})_{1 \leq k \leq m}$
in increasing order. 

ii) $p_m$ is positive, decreasing and convex on the interval $[0, x_{1,m})$\,.

iii) Define the function $\varphi_m$ on the interval $[0,x_{1,m})$ as
\[
\varphi_m(x) = p_m(x) \paren{\frac{x_{1,m}}{x_{1,m}-x}}^{\frac{1}{2}}\,.
\]
Then it holds
\begin{equation}
\label{eq:Nem1}
[p_m,p_m]_{(0)}^{\frac{1}{2}} = \norm{p_m(S_n)T_n^*\Y}_{\cH}  \leq \norm{F_{x_{1,m}} \varphi_m(S_n) T_n^* \Y }_{\cH}\,,
\end{equation}
and furthermore, for any $\nu\geq 0$ (and the convention $0^0=1$):
\begin{equation}
\label{eq:boundphi}
\sup_{x \in [0,x_{1,m}]} x^\nu \varphi^2_m(x) \leq \nu^\nu \abs{p'_m(0)}^{-\nu}\,.
\end{equation}

iv) Denote $p_0^{(2)},p_1^{(2)}, \ldots, p_{n_\Y-1}^{(2)}$ the unique sequence of orthogonal
polynomials with respect to $[.,.]_{(2)}$ and with constant term equal to 1. This sequence enjoys
properties (i) and (ii) above, with $(x^{(2)}_{k,m})_{1 \leq k \leq m}$ denoting the
distinct roots of $p_m^{(2)}$ in increasing order.
Then it holds  that $x_{1,m} \leq x^{(2)}_{1,m}$\,. Finally, the following holds (Christoffel-Darboux
identity):
\begin{equation}
\label{eq:pcompar}
0 \leq {{p'_{m-1}}(0)} - {{p'_{m}}(0)} =
\frac{\brac{p_{m-1},p_{m-1}}_{(0)} -
  \brac{p_{m},p_{m}}_{(0)} }{\brac{p^{(2)}_{m-1},p^{(2)}_{m-1}}_{(1)}}
\leq \frac{\brac{p_{m-1},p_{m-1}}_{(0)}}{\brac{p^{(2)}_{m-1},p^{(2)}_{m-1}}_{(1)}}\,.
\end{equation}
\end{lemma}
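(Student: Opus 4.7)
The overall plan is to treat the four parts sequentially, leveraging the key fact (established in the text preceding the lemma) that $\{p_0,p_1,\ldots,p_{n_\Y-1}\}$ is an orthogonal polynomial sequence with respect to the positive finitely-supported discrete measure $\mu_n^{(1)}$ on $(0,\kappa]$. Parts (i) and (ii) are then adaptations of standard orthogonal polynomial facts, while (iii) and (iv) require more delicate arguments specific to the CG analysis; the Nemirovski-Hanke residual estimate \eqref{eq:Nem1} is the main technical hurdle.

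For (i), I would use the classical sign-change argument: if $p_m$ changed sign at only $\ell < m$ points $y_1<\ldots<y_\ell$ in the support of $\mu_n^{(1)}$, then the polynomial $r(x)=\prod_{i\leq\ell}(x-y_i)\in\cP_{m-1}$ would make $p_m\cdot r$ of constant sign on the support, contradicting the orthogonality $[p_m,r]_{(1)}=0$. Degree exactly $m$ follows because $p_m\in\cP^0_{m-1}$ would force $p_m=p_{m-1}$ by uniqueness of the minimizer, contradicting $[p_{m-1},p_{m-1}]_{(1)}>0$ for $m-1<n_\Y$. For (ii), I would write $p_m(x)=\prod_k(1-x/x_{k,m})$ (positive on $[0,x_{1,m})$), then use logarithmic derivatives: $(\log p_m)'(x)=-\sum_k (x_{k,m}-x)^{-1}<0$ gives monotonicity, and the expansion $p_m''(x)=p_m(x)\bigl[((\log p_m)')^2+(\log p_m)''\bigr]=2\,p_m(x)\sum_{j<k}\frac{1}{(x_{j,m}-x)(x_{k,m}-x)}\geq 0$ yields convexity.

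For (iii), the pointwise bound \eqref{eq:boundphi} is straightforward: from the factorization $p_m=(1-x/x_{1,m})g$ with $g\in\cP^0_{m-1}$ of roots $x_{2,m},\ldots,x_{m,m}$, one gets $\varphi_m^2(x)=g^2(x)(x_{1,m}-x)/x_{1,m}$, then the elementary estimate $(1-u)^2\leq e^{-u}$ for $u\in[0,1]$ applied to each factor yields $\varphi_m^2(x)\leq\exp(-x\sum_k 1/x_{k,m})=\exp(-x|p_m'(0)|)$, and maximizing $x^\nu e^{-xa}$ over $x\geq 0$ gives the claimed $\nu^\nu|p_m'(0)|^{-\nu}$. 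The residual inequality \eqref{eq:Nem1} is the hard part and the main obstacle; it is proved by a Nemirovski-style argument exploiting the minimality of $p_m$ in $\cP^0_m$ against a one-parameter family of competitor polynomials of the form $(1-x/y)g(x)$: the optimality condition at $y=x_{1,m}$ produces an identity that, together with the fact that $\mu_n^{(0)}$ is supported in $(0,\kappa]$, lets one absorb the mass of the atoms with $\xi_{n,i}\geq x_{1,m}$ into the weighted integral $\int_{[0,x_{1,m})}p_m^2\cdot x_{1,m}/(x_{1,m}-x)\,d\mu_n^{(0)}$ that defines the right-hand side.

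For (iv), the interlacing $x_{1,m}\leq x^{(2)}_{1,m}$ is a classical monotonicity result for roots of orthogonal polynomials: multiplying the weight by $x$ (passing from $\mu_n^{(1)}$ to $\mu_n^{(2)}$) shifts the support upward in an averaged sense and forces the smallest root to increase, which can be shown by comparing the minimization problems defining $p_m$ and $p_m^{(2)}$ or directly via Markov's separation theorem. The Christoffel-Darboux-type identity \eqref{eq:pcompar} would follow by writing the three-term recurrence for the orthogonal polynomial sequence $\{p_m\}$ for $[\cdot,\cdot]_{(1)}$, using the normalization $p_m(0)=1$, and differentiating at $x=0$: the recurrence's leading coefficient ratio expresses $p'_m(0)-p'_{m-1}(0)$ in terms of the norm ratio $[p_{m-1},p_{m-1}]_{(0)}/[p_{m-1}^{(2)},p_{m-1}^{(2)}]_{(1)}$, and the sign/monotonicity in $m$ of the numerator (decreasing, by nestedness of $\cP^0_m$) yields the sharp form stated. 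The positivity of $p'_{m-1}(0)-p'_m(0)$ reflects the fact that $|p'_m(0)|=\sum_k 1/x_{k,m}$ grows with $m$ as new roots are added.
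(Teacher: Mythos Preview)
Your proposal is essentially correct and in fact supplies considerably more detail than the paper itself, which does not give a self-contained proof of this lemma but instead collects the statements from the monograph of Hanke (1995), with (iii) attributed to Nemirovski\u{\i} (1986) and also found in Engl et al.\ (1996, Chapter~7). Your sketches are consistent with the arguments in those references.

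A few minor comparative remarks. For (ii), the paper points to the interlacing of roots of successive derivatives of a real polynomial with only positive roots; your logarithmic-derivative computation is an equivalent and equally clean route. For the bound \eqref{eq:boundphi}, your exponential majorization $\varphi_m^2(x)\leq \exp(-x|p_m'(0)|)$ actually gives the sharper constant $(\nu/e)^\nu$ instead of $\nu^\nu$, which is of course enough. For \eqref{eq:Nem1}, your description is accurate: the first-order optimality condition for $y\mapsto [(1-x/y)g,(1-x/y)g]_{(0)}$ at $y=x_{1,m}$ is precisely the orthogonality $[p_m,g]_{(1)}=0$ with $g(x)=p_m(x)/(1-x/x_{1,m})$; splitting this integral at $x_{1,m}$ and using that $p_m(x)g(x)=p_m^2(x)\,x_{1,m}/(x_{1,m}-x)\leq 0$ on $(x_{1,m},\kappa]$ yields exactly the absorption of the upper-tail mass you describe. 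For (iv), the paper simply cites Hanke's Corollaries~2.6 and~2.7; your three-term recurrence plus differentiation at $0$ is the standard derivation. Your final heuristic that $|p_m'(0)|$ grows ``as new roots are added'' is not literally a proof (the roots shift with $m$), but you already gave the correct justification via the nestedness of $\cP_m^0$ and the monotonicity of $[p_m,p_m]_{(0)}$.
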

For a proof of these properties see the monograph of \citet{Han95},
from which the above properties have been collected.
Existence of a unique family of orthogonal polynomials $p_k^{(\ell)}$ for any $\ell\geq 0$\,, up
to degree $ n_\Y-1$, is guaranteed by the fact that
the measures $\mu_n^{(\ell)}$ have support of cardinality $n_\Y$\,.
Point (i) is well-known in the theory of orthogonal polynomials,
see also \citet{Han95}, Section~2.4.
Point (ii) is equally well-known and an easy consequence
of (i), namely (ii) holds true for any real polynomial of degree $m$ having $m$
real positive roots and taking a positive value at 0, due to the interlacing property of the roots of its derivatives. For point (iv), all roots of $p_m^{(2)}$ are positive by standard results of
orthogonal polynomial theory, so
we can normalize these polynomials to have constant term equal to 1.
Relation \eqref{eq:pcompar}, resp. the relation $x_{1,m} \leq x_{1,m}^{(2)}$ 
can be found as Corollary 2.6, resp. 2.7, of \citet{Han95}.
 Finally, point (iii) can be found
as a ingredient of the proof of Lemma~3.7 of \citet{Han95}, more
precisely \eqref{eq:Nem1},\eqref{eq:boundphi} are found respectively
as  (3.8) and (3.10) there (or equivalently as (7.7) and (7.8) in Chapter~7
of \citealp{Engl96}).
The seminal idea of introducing the function $\varphi_m$ above and properties
\eqref{eq:Nem1}-\eqref{eq:boundphi} are originally due to \citet{Nem86}.

\subsection{Proof of Theorem~\ref{thm:inner}}
\label{se:prinner}

We recall that since we assume $r\geq 1/2$\,, there exists $\fst_\cH \in \cH$ such
that $f^* = Tf^*_\cH$ holds. The main effort below is to analyze the algorithm
when the events of high probability of Lemma \ref{prop:relconc} are satisfied.
To simplify notation, we will define the following event, where
$\Lambda \geq 1, \Delta \geq 0$ are constants and $\delta(\lambda)\geq 0$
only depends on $\lambda$\,:
\[
\text{ \bf B}(\lambda) :
\begin{cases}
\norm{(S+\lambda I)^{-\frac{1}{2}}(T_n^*\Y-S_nf^*_\cH)}_{\cH} & \leq \delta(\lambda)\,,\\
\displaystyle \norm{(S+\lambda I)(S_n
  +\lambda I)^{-1}}_{HS} & \leq \Lambda^2,\\
\norm{S-S_n}_{HS} &\leq \kappa\Delta\,.
\end{cases}
\]
In the rest of this proof we set $\mu=r-1/2$\,. 
Under the source condition assumption {\bf SC($r,\rho$)}, for
$r\geq\frac{1}{2}$ the
representation $f^* = K^r u$ can be rewritten
\[
f^* = (TT^*)^ru = T (T^*T)^{r-\frac{1}{2}} (T^*T)^{-\frac{1}{2}} T^*u
= T S^\mu S^{-\frac{1}{2}} T^*u,
\]
by identification we therefore have the source condition for $f_{\cH}$
given by $f_{\cH} = S^\mu w$ with 
$w =S^{-\frac{1}{2}} T^*u $\,, and $\norm{w}_{\cH} \leq \norm{u}_{2,\nux} \leq \kappa^{-\mu - \frac{1}{2}} 
\rho$\,, 
since $S^{-\frac{1}{2}} T^*$ is a partial isometry from $\mathcal{L}^2(\nux)$ into $\cH$\,.

Finally we define following shortcut notation for $\beta>0$\,:
\begin{equation}
\label{eq:defZ}
Z_\beta(\lambda) =
\begin{cases}
\lambda^\beta & \text{ for } \beta \leq 1\,, \\
\kappa^\beta \Delta & \text{ for } \beta > 1.\\
\end{cases}
\end{equation}
In order to simplify notation, for the remainder of the paper we will
omit the indices from $\norm{.}_\cH$ and $\norm{.}_{2,\nux}$\,;
in other words the notation $\norm{.}$ will be overloaded to mean operator,
$\cH$ or $\mathcal{L}^2(\nux)$ norm, depending (nonambiguously) on the context.
Note that the $\mathcal{L}^2(\nux)$ norm will not be explicitly used again
until the proof
of Theorem~\ref{thm:outer} in Section~\ref{proof:outer}.

We start with a technical lemma encapsulating a couple of bounding devices 
that we will use repeatedly.
\begin{lemma}
\label{le:lemma0}
Let $\lambda>0$ be fixed.  
Assume the event {\bf B($\lambda$)} is satisfied.
For any $\nu \in [0,1]$\,, it holds
\begin{equation}
\label{eq:boundin0}
\norm{(S +\lambda I)^{\nu}(S_n +\lambda I)^{-\nu}} \leq \Lambda^{2\nu}\,.
\end{equation}
For any $\nu \in [0,1]$\,, and any $h \in \cH$\,, it holds
\begin{equation}
\label{eq:boundin1}
\norm{S^{\nu} h} \leq \Lambda^{2\nu} \norm{(S_n + \lambda)^{\nu} h}\,.
\end{equation}
For any $\nu>0$ and for any $\phi: [0,\kappa] \rightarrow \mbr$
measurable function, it holds
\begin{equation}
\label{eq:boundin2}
\norm{\phi(S_n)S^\nu} \leq \Lambda^{2} \paren{\sup_{t\in[0,\kappa]} t^\nu \phi(t)
+ (\nu \vee 1) Z_\nu(\lambda) \sup_{t\in[0,\kappa]} \phi(t)}\,.
\end{equation}
\end{lemma}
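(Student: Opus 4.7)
The three claims form a natural ladder: (1) is the core resolvent comparison coming from $\mathbf{B}(\lambda)$, (2) is a quick corollary of (1), and (3) requires splitting into the sub-cases $\nu\leq 1$ and $\nu>1$. I first note that the Hilbert--Schmidt bound in $\mathbf{B}(\lambda)$ dominates the operator norm, so $\norm{(S+\lambda I)(S_n+\lambda I)^{-1}}\leq\Lambda^2$, and this is the form I will actually use.

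For (1), I plan to invoke the Heinz--L\"owner (a.k.a.\ Cordes) inequality: for any bounded positive self-adjoint operators $A,B$ on $\cH$ and any $\nu\in[0,1]$, $\norm{A^\nu B^\nu}\leq\norm{AB}^\nu$. Applied with $A=S+\lambda I$ and $B=(S_n+\lambda I)^{-1}$ (both positive self-adjoint for $\lambda>0$), this immediately yields $\norm{(S+\lambda I)^\nu(S_n+\lambda I)^{-\nu}}\leq\Lambda^{2\nu}$. Claim (2) then follows by writing $S^\nu h=[S^\nu(S+\lambda I)^{-\nu}](S+\lambda I)^\nu h$: since $S$ and $S+\lambda I$ commute, the single-operator functional calculus gives $\norm{S^\nu(S+\lambda I)^{-\nu}}\leq \sup_{t\in[0,\kappa]}(t/(t+\lambda))^\nu\leq 1$; inserting $(S_n+\lambda I)^{-\nu}(S_n+\lambda I)^\nu$ and applying (1) to the factor $(S+\lambda I)^\nu(S_n+\lambda I)^{-\nu}$ produces the result.

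For (3) when $\nu\in(0,1]$, I factor $\phi(S_n)S^\nu=[\phi(S_n)(S_n+\lambda I)^\nu]\,[(S_n+\lambda I)^{-\nu}S^\nu]$. The first bracket is a function of $S_n$ alone, hence by functional calculus its norm equals $\sup_{t}\phi(t)(t+\lambda)^\nu$, which by the subadditivity $(t+\lambda)^\nu\leq t^\nu+\lambda^\nu$ (valid for $\nu\in[0,1]$) is at most $\sup_t t^\nu\phi(t)+\lambda^\nu\sup_t\phi(t)$. For the second bracket I repeat the argument of (2), writing $(S_n+\lambda I)^{-\nu}S^\nu=[(S_n+\lambda I)^{-\nu}(S+\lambda I)^\nu]\,[(S+\lambda I)^{-\nu}S^\nu]$; the first factor is bounded by $\Lambda^{2\nu}$ via (1) and self-adjointness, the second by $1$ as above. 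Combining, one obtains $\Lambda^{2\nu}(\sup_t t^\nu\phi(t)+\lambda^\nu\sup_t\phi(t))$, which, since $\Lambda\geq 1$ and $2\nu\leq 2$, matches the claim with $Z_\nu(\lambda)=\lambda^\nu$ and $\nu\vee 1=1$.

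When $\nu>1$ I use the additive split $S^\nu=S_n^\nu+(S^\nu-S_n^\nu)$. The first piece contributes $\norm{\phi(S_n)S_n^\nu}\leq\sup_t t^\nu\phi(t)$ by the single-operator functional calculus. For the second piece I need the operator-Lipschitz estimate $\norm{S^\nu-S_n^\nu}\leq \nu\kappa^{\nu-1}\norm{S-S_n}\leq \nu\kappa^\nu\Delta$. For integer $\nu$ this is immediate from the telescoping identity $A^\nu-B^\nu=\sum_{k=0}^{\nu-1}A^k(A-B)B^{\nu-1-k}$ since $S,S_n$ have spectra in $[0,\kappa]$; for non-integer $\nu>1$ one writes $\nu=\ell+\sigma$ with $\ell\in\mathbb{N}$ and $\sigma\in[0,1)$, and combines the integer-case telescoping for $S^\ell-S_n^\ell$ with the L\"owner--Heinz fractional bound $\norm{S^\sigma-S_n^\sigma}\leq\norm{S-S_n}^\sigma$ on the remainder. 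The second piece is then bounded by $\sup_t\phi(t)\cdot\nu\kappa^\nu\Delta=\nu Z_\nu(\lambda)\sup_t\phi(t)$; padding by the free factor $\Lambda^2\geq 1$ completes the case. The principal technical hurdle is precisely this operator-Lipschitz bound for non-integer powers, which is the only non-routine ingredient -- all other steps combine operator monotonicity with the ingredients already packaged in $\mathbf{B}(\lambda)$.
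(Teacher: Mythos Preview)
Your treatment of \eqref{eq:boundin0}, \eqref{eq:boundin1}, and the case $\nu\le 1$ of \eqref{eq:boundin2} matches the paper's proof essentially line for line (Cordes/Heinz inequality for the first, the same three-factor decomposition for the other two). The integer case of $\nu>1$ via the telescoping identity is also fine.

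The gap is in the non-integer case $\nu>1$. Writing $\nu=\ell+\sigma$ with $\sigma\in(0,1)$ and decomposing
\[
S^{\nu}-S_n^{\nu}=S^{\ell}(S^{\sigma}-S_n^{\sigma})+(S^{\ell}-S_n^{\ell})S_n^{\sigma},
\]
your ``L\"owner--Heinz fractional bound'' $\norm{S^{\sigma}-S_n^{\sigma}}\le\norm{S-S_n}^{\sigma}$ only yields
\[
\norm{S^{\nu}-S_n^{\nu}}\le \kappa^{\ell}\norm{S-S_n}^{\sigma}+\ell\kappa^{\nu-1}\norm{S-S_n}
\le \kappa^{\nu}\Delta^{\sigma}+\ell\kappa^{\nu}\Delta,
\]
and since $\Delta$ is of order $n^{-1/2}$, the term $\Delta^{\sigma}$ is \emph{much larger} than $\Delta$ for $\sigma<1$. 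You therefore do not obtain the claimed linear-in-$\Delta$ bound $\nu\kappa^{\nu}\Delta$; the argument delivers only $Z_\nu(\lambda)$ replaced by $\kappa^{\nu}\Delta^{\sigma}$, which is too weak for the subsequent use of the lemma (cf.\ the proof of \eqref{eq:boundZ}).

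The paper circumvents this precisely by passing to the Hilbert--Schmidt norm: for any $L$-Lipschitz function $\psi$ on $[0,\kappa]$ and positive self-adjoint Hilbert--Schmidt operators $A,B$, one has $\norm{\psi(A)-\psi(B)}_{HS}\le L\norm{A-B}_{HS}$ (this is the content of the cited result of Bhatia, and the paper explicitly remarks that the analogous statement \emph{fails} for the operator norm in general). Since $t\mapsto t^{\nu}$ is $\nu\kappa^{\nu-1}$-Lipschitz on $[0,\kappa]$ for $\nu\ge 1$, and $\norm{\cdot}\le\norm{\cdot}_{HS}$, this gives
\[
\norm{S^{\nu}-S_n^{\nu}}\le\norm{S^{\nu}-S_n^{\nu}}_{HS}\le \nu\kappa^{\nu-1}\norm{S-S_n}_{HS}\le \nu\kappa^{\nu}\Delta,
\]
using the third component of $\mathbf{B}(\lambda)$ (which is stated in HS norm for exactly this reason). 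Replace your fractional-power argument by this HS-Lipschitz step and the proof goes through.
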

\begin{proof}
Inequality \eqref{eq:boundin0} is a direct consequence of the second component
in event {\bf B($\lambda$)}, and of the operator norm inequality $\norm{A^\nu B^\nu} \leq
\norm{AB}^\nu$ for self-adjoint positive operators.  See \citet{Bat97}, Theorem X.1.1,
where the result is stated for positive matrices,  but the proof applies as well to positive operators on a Hilbert space.
For the second inequality, we have
\[
\norm{S^{\nu} h} \leq \norm{S^{\nu}(S +\lambda I)^{-\nu}} 
\norm{(S +\lambda I)^{\nu}(S_n +\lambda I)^{-\nu}} \norm{(S_n +\lambda I)^{\nu} h}
\leq \Lambda^{2\nu} \norm{(S_n + \lambda)^{\nu} h}\,.
\]
Concerning the last part of the lemma, we first consider the case $\nu>1$\,; then
\begin{align*}
\norm{\phi(S_n)S^\nu} & \leq
\norm{\phi(S_n)}\norm{(S^\nu-S_n^\nu)} + \norm{\phi(S_n)S_n^\nu} \\
& \leq  \sup_{t\in[0,\kappa]} t^\nu \phi(t) + \norm{(S^\nu-S_n^\nu)} \sup_{t\in[0,\kappa]} \phi(t)
\end{align*}
Furthermore, we have
\[
\norm{(S^\nu-S_n^\nu)} \leq \norm{(S^\nu-S_n^\nu)}_{HS} \leq 
\nu \kappa^{\nu-1} \norm{S-S_n}_{HS} \leq
\nu \kappa^\nu \Delta.
\]
The second inequality used that if $A,B$ are two semipositive 
self-adjoint Hilbert-Schmidt operators,
and $\phi$ is a $L$-Lipschitz function on $[0,\max(\norm{A},\norm{B})]$\,, 
then $\norm{\phi(A)-\phi(B)}_{HS}\leq L \norm{A-B}_{HS}$
(see, for instance, \citealp{Bat97}, Lemma VII.5.5, for a proof in the finite dimensional
case that can be easily extended to the Hilbert-Schmidt case. Note 
in passing that this inequality does not hold for the operator norm in general). 
We applied this property to the power function $x\mapsto x^\nu$\,, which is 
$\nu \kappa^{\nu-1}$-Lipschitz over $[0, \kappa]$\,.

In the case $\nu \leq 1$\,, we have
\begin{align*}
\norm{\phi(S_n)S^\nu} & \leq
\norm{\phi(S_n)(S_n+\lambda I)^\nu}
\norm{(S_n+\lambda I)^{-\nu}(S+\lambda I)^\nu}
\norm{(S+\lambda I)^{-\nu}S^\nu}\\
& \leq  \Lambda^{2\nu} \paren{\sup_{t\in[0,\kappa]} t^\nu \phi(t) + \lambda^{\nu} 
\sup_{t\in[0,\kappa]} \phi(t)}\,.
\end{align*}
\end{proof}

\begin{lemma}[Bounding the error]
\label{le:lemma2}
Assume condition  {\bf SC($r,\rho$)} holds, $r\geq \frac{1}{2}$\,.
For any $\lambda>0$\,, if the event {\bf B($\lambda$)} is satisfied,
then for any iteration step $1 \leq m \leq n_{\Y}$ and
$\theta\in[0,\frac{1}{2}]$\,, for any
$\eps \in (0,x_{1,m})$\,, and denoting $\wt{\eps}:=\min(\eps,\abs{p'_m(0)}^{-1})$\,:
\begin{align*}
\norm{S^{\frac{1}{2}-\theta}(f_m-f^*_\cH)} \leq & c(\Lambda,\mu) \Bigg(
{\wt{\eps}}^{-1} \paren{ \wt{\eps} + \lambda}^{1-\theta}
 \delta(\lambda)
+\paren{\eps^\mu + Z_\mu(\lambda)}(\eps+\lambda)^{\frac{1}{2}-\theta}
\kappa^{-\mu-\frac{1}{2}} \rho \\
& \qquad \qquad +
\eps^{-1} \paren{\eps + \lambda}^{\frac{1}{2}-\theta} 
\norm{T_n^*(T_nf_m-\Y)} \Bigg)
\end{align*}
For $m=0$\,, the above inequality is valid for any $\eps>0$\,.
\end{lemma}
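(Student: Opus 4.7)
The plan is to decompose the error via the CG polynomial identity $p_m(x)=1-xq_m(x)$ together with $f_m=q_m(S_n)T_n^*\Y$, which yields
\[
f_m-f^*_\cH = q_m(S_n)N - p_m(S_n)S^\mu w,
\]
where $N:=T_n^*\Y-S_nf^*_\cH$ satisfies $\norm{(S+\lambda I)^{-1/2}N}\le\delta(\lambda)$ on the event $\mathbf{B}(\lambda)$, and $f^*_\cH=S^\mu w$ with $\norm{w}\le\kappa^{-\mu-1/2}\rho$ comes from the source condition. Weighting by $S^{1/2-\theta}$, the task reduces to controlling a noise term $\norm{S^{1/2-\theta}q_m(S_n)N}$ and a bias term $\norm{S^{1/2-\theta}p_m(S_n)S^\mu w}$.

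For the noise term I would write $N=(S+\lambda I)^{1/2}(S+\lambda I)^{-1/2}N$ and apply inequalities \eqref{eq:boundin0}, \eqref{eq:boundin1} of Lemma~\ref{le:lemma0} to transfer all operator functions to the $S_n$-functional calculus; this reduces the estimate to bounding the scalar supremum $\sup_{t\in[0,\kappa]}(t+\lambda)^{1-\theta}\abs{q_m(t)}$. By the convexity and monotonicity of $p_m$ on $[0,x_{1,m})$ from Lemma~\ref{lem:orthpol}(ii), one has $\abs{q_m(t)}\le\abs{p'_m(0)}$ on $[0,x_{1,m})$ (via $1-p_m(t)\le t\abs{p'_m(0)}$) combined with the plain identity $\abs{q_m(t)}\le(1+\abs{p_m(t)})/t$; a short case analysis according to whether $\eps\le\abs{p'_m(0)}^{-1}$ or not then produces the targeted scalar bound $C(\wt\eps+\lambda)^{1-\theta}\wt\eps^{-1}$, which is precisely the role of the quantity $\wt\eps=\min(\eps,\abs{p'_m(0)}^{-1})$.

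For the bias term I would split using the $S_n$-spectral projector $F_\eps$. On the low part $F_\eps p_m(S_n)S^\mu w$, inequality \eqref{eq:boundin2} lets me push $S^\mu$ past the $S_n$-function at the cost of the correction $(\mu\vee 1)Z_\mu(\lambda)$, and since $p_m\le 1$ on $[0,\eps]\subset[0,x_{1,m})$, I obtain the bound $(\eps^\mu+(\mu\vee 1)Z_\mu(\lambda))(\eps+\lambda)^{1/2-\theta}\kappa^{-\mu-1/2}\rho$. On the high part, I invert $S_n$ on the range of $I-F_\eps$ and use $S_nf^*_\cH=T_n^*\Y-N$ together with $p_m=I-S_nq_m$ to write
\[
(I-F_\eps)p_m(S_n)f^*_\cH = (I-F_\eps)S_n^{-1}p_m(S_n)T_n^*\Y - (I-F_\eps)S_n^{-1}N + (I-F_\eps)q_m(S_n)N.
\]
The first summand yields the residual piece $\eps^{-1}(\eps+\lambda)^{1/2-\theta}\norm{T_n^*(T_nf_m-\Y)}$ via $\norm{p_m(S_n)T_n^*\Y}=\norm{T_n^*(T_nf_m-\Y)}$ and the scalar identity $\sup_{t\ge\eps}(t+\lambda)^{1/2-\theta}/t=\eps^{-1}(\eps+\lambda)^{1/2-\theta}$; the remaining two summands are further noise contributions at scale $\eps$, which are absorbed into the main noise bound at scale $\wt\eps$ through a short case analysis on $\lambda$ using $\wt\eps\le\eps$. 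The degenerate case $m=0$ (where $p_0\equiv 1$, $q_0=0$, so $\wt\eps=\eps$) follows directly from the bias calculation alone, which explains why any $\eps>0$ is then admissible.

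The main technical difficulty will be juggling the two non-commuting spectral calculi of $S$ (where the source condition lives) and $S_n$ (where the CG polynomials act); Lemma~\ref{le:lemma0} is designed precisely to bridge this gap at the price of the factor $\Lambda$ and the correction $Z_\mu(\lambda)$.
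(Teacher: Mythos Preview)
Your decomposition $f_m-f^*_\cH=q_m(S_n)N-p_m(S_n)S^\mu w$ is correct and is essentially what the paper uses (under the guise of $\bar f_m=q_m(S_n)S_nf^*_\cH$). However, your treatment of the noise term has a genuine gap. You claim the estimate reduces to bounding $\sup_{t\in[0,\kappa]}(t+\lambda)^{1-\theta}\abs{q_m(t)}$, but this supremum is \emph{not} controlled: the inequalities $\abs{q_m(t)}\le\abs{p'_m(0)}$ and $\abs{p_m(t)}\le 1$ coming from Lemma~\ref{lem:orthpol}(ii) are valid only on $[0,x_{1,m})$, and for $t>x_{1,m}$ the polynomial $q_m$ (of degree $m-1$) can be arbitrarily large. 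The identity $\abs{q_m(t)}\le(1+\abs{p_m(t)})/t$ does not help either, since $\abs{p_m(t)}$ is itself uncontrolled beyond $x_{1,m}$. For the same reason, the term $(I-F_\eps)q_m(S_n)N$ appearing in your high-bias decomposition is \emph{not} a tractable ``noise contribution at scale $\eps$'' as you assert.

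The fix is a cancellation you overlook. If you split the noise term as $q_m(S_n)N=F_\eps q_m(S_n)N+(I-F_\eps)q_m(S_n)N$, the high-spectrum piece cancels exactly with the $(I-F_\eps)q_m(S_n)N$ occurring (with the opposite sign, since the bias enters as $-p_m(S_n)f^*_\cH$) in your identity for $(I-F_\eps)p_m(S_n)f^*_\cH$. After this cancellation only $F_\eps q_m(S_n)N$, $F_\eps p_m(S_n)S^\mu w$, $(I-F_\eps)S_n^{-1}p_m(S_n)T_n^*\Y$ and $(I-F_\eps)S_n^{-1}N$ remain, which is precisely the paper's split (I)+(II)+(III). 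The paper sidesteps the issue from the outset by applying $F_\eps,F_\eps^\perp$ to the full error $f_m-f^*_\cH$ rather than to the bias alone, so that $q_m$ only ever appears restricted to $[0,\eps]\subset[0,x_{1,m})$ and no global control is needed.
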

\begin{proof}
Set $\bar{f}_m=q_m(S_n)S_nf^*_\cH$\,. This is the element  in $\mathcal{H}$ that we obtain
by applying  the $m$th-iteration CG polynomial $q_m$ to the {\em noiseless} data.  We have
using \eqref{eq:boundin1}
\begin{align*}
\norm{S^{\frac{1}{2}-\theta}(f_m-f^*_\cH)} & \leq \Lambda^{1-2\theta} \norm{(S_n+\lambda I)^{\frac{1}{2}-\theta}(f_m - f^*_\cH)}\\
&\leq
\Lambda\left(\norm{F_{\eps}(S_n+\lambda I)^{\frac{1}{2}-\theta}(f_m-\bar{f}_m)}
+ \norm{F_{\eps} (S_n+\lambda I)^{\frac{1}{2}-\theta}(\bar{f}_m-f^*_\cH)}\right. \\
& \qquad \; \; \left. + \norm{F_{\eps}^\perp (S_n+\lambda I)^{\frac{1}{2}-\theta}(f_m-f^*_\cH)} \right)\\
  &:= \Lambda ( (I) + (II) + (III) )\,,
\end{align*}
where we denote $F_\eps^\perp := (I-F_\eps)$\,.  We upper bound the first summand
and start with using the first component of event {\bf B}($\lambda$):
\begin{align*}
(I) = \norm{F_{\eps}(S_n+\lambda I)^{\frac{1}{2}-\theta}(f_m-\bar{f}_m)}
 & = \norm{F_{\eps}(S_n+\lambda I)^{\frac{1}{2}-\theta}q_m(S_n)
   (S+\lambda I)^{\frac{1}{2}} (S+\lambda I)^{-\frac{1}{2}} \paren{T_n^*
     \Y - S_n f^*_\cH}}\\
& \leq \norm{F_{\eps}(S_n+\lambda I)^{1-\theta}q_m(S_n)}
  \norm{(S_n+\lambda I)^{-\frac{1}{2}} (S+\lambda I)^{\frac{1}{2}}}  \delta(\lambda)\\
& \leq \Lambda \delta(\lambda) \paren{\sup_{x \in [0,\eps]} x^{1-\theta} q_m(x)
  + \lambda^{1-\theta} \sup_{x \in [0,\eps]} q_m(x)}\\
& \leq \Lambda \delta(\lambda) \paren{ 
\paren{\sup_{x \in [0,\eps]} q_m(x)}^{\theta}
\paren{\sup_{x \in [0,\eps]} x q_m(x)}^{1-\theta}
  + \lambda^{1-\theta} \abs{p'_m(0)}}\\
& \leq \Lambda \delta(\lambda) \paren{ \abs{p'_m(0)}^\theta + \lambda^{1-\theta} \abs{p'_m(0)}}\\
& \leq 2 \Lambda \delta(\lambda) \wt{\eps}^{-1} 
\paren{ \lambda + \wt{\eps}}^{1-\theta}\,.
\end{align*}
The second to last inequality is obtained by the following argument: if $m\geq 1$\,,
since $\eps\leq x_{1,m}$\,, $p_m$ is decreasing and convex in $[0,\eps]$\,
(see Lemma~\ref{lem:orthpol}, point (ii) ), we have
\[
q_m(x) = \frac{1-p_m(x)}{x} \leq \abs{p'_m(0)} \qquad \text{ for } x\in[0,\eps]\,;
\]
and also $ xq_m(x) = 1-p_m(x) \leq 1$ for $x\in[0,\eps]\,.$ If $m=0$\,,
we have $p_0 \equiv 1$ and $q_m\equiv 0$\,, so that $f_m=\bar{f}_m=0$
and the above upper bound is also trivially satisfied
for any $\eps>0$\,.\\
{\em Second summand:} 
Using \eqref{eq:boundin2},
and the fact that $\abs{p_m(x)} \leq 1$ for $x\in[0, \eps]$\,:
\begin{align*}
(II) = \norm{F_{\eps} (S_n+\lambda I)^{\frac{1}{2}-\theta}(\bar{f}_m-f^*_\cH)}
& =  \norm{F_{\eps} (S_n+\lambda I)^{\frac{1}{2}-\theta}p_m(S_n)S^\mu w} \\
& \leq \Lambda^{2} \paren{\eps^\mu (\eps+\lambda)^{\frac{1}{2}-\theta}
+ c(\mu) Z_\mu(\lambda) (\eps+\lambda)^{\frac{1}{2}-\theta}}\norm{w}\\
& \leq \Lambda^{2} \paren{\eps^\mu + c(\mu) Z_\mu(\lambda)}(\eps+\lambda)^{\frac{1}{2}-\theta}
\kappa^{-\mu-\frac{1}{2}} \rho.
\end{align*}
{\em Third summand:} observe that since $F_{\eps}^\perp = \mbf{1}_{[\eps,\infty)}(S_n)$\,, 
we can write $F_{\eps}^\perp = F_{\eps}^\perp S_n^{-1}S_n$ and
\begin{align*}
(III) = \norm{F_{\eps}^\perp (S_n+\lambda I)^{\frac{1}{2}-\theta}(f_m-f^*_\cH)}
& \leq \norm{F_{\eps}^\perp (S_n+\lambda I)^{1-\theta}S_n^{-1}}
\norm{  F_{\eps}^\perp(S_n+\lambda I)^{-\frac{1}{2}}S_n(f_m-f^*_\cH)}\\
&\begin{aligned}
 \leq  
\paren{\eps + \lambda}^{1-\theta} \eps^{-1}
& \left(
\norm{F_{\eps}^\perp (S_n+\lambda I)^{-\frac{1}{2}}T_n^*(T_nf_m -\Y)}
\right.\\
&\left. \;\; +
\norm{(S_n+\lambda I)^{-\frac{1}{2}}(T_n^*\Y - S_n f^*_\cH)}\right)
\end{aligned}\\
&\leq \eps^{-1} \paren{\eps+\lambda}^{\frac{1}{2}-\theta}
\norm{T_n^*(T_nf_m -\Y)} +
\Lambda 
\eps^{-1} \paren{\eps + \lambda}^{1-\theta} 
\delta(\lambda)\,.
\end{align*}
Gathering the three terms and rearranging leads to the announced inequality.
\end{proof}

\begin{lemma}[Bounding the residue]
\label{le:lemma1}
Assume condition  {\bf SC($r,\rho$)} holds, $r\geq \frac{1}{2}$\,.
Let $\lambda>0$\, be fixed and assume event {\bf B($\lambda$)} holds.
Then for any iteration step $1\leq m\leq n_{\Y}$\,:
\begin{align}
\norm{T_n^*(T_n f_m -\Y)}  \leq & c(\mu) \Lambda^2 \paren{ \abs{p'_m(0)}^{-(\mu+1)}
  + Z_\mu(\lambda)\abs{p'_m(0)}^{-1}}
\kappa^{-\mu-\frac{1}{2}}\rho \nonumber \\
& + \paren{\abs{p'_m(0)}^{-\frac{1}{2}}+\lambda^{\frac{1}{2}}}\Lambda \delta(\lambda)\,.
\label{eq:lemma1}
\end{align}
\end{lemma}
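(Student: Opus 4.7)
The plan is to reduce $\norm{T_n^*(T_n f_m - \Y)}$ to the residual polynomial applied to $T_n^*\Y$ and then exploit the key bound \eqref{eq:Nem1} from Lemma~\ref{lem:orthpol}(iii). Indeed, since $f_m = q_m(S_n) T_n^*\Y$ and $S_n = T_n^* T_n$, we have $T_n^*(T_n f_m - \Y) = (S_n q_m(S_n) - I) T_n^*\Y = -p_m(S_n) T_n^*\Y$. By \eqref{eq:Nem1}, the norm of this quantity is bounded by $\norm{F_{x_{1,m}}\varphi_m(S_n) T_n^*\Y}$, and the problem becomes one of estimating $\varphi_m(S_n) T_n^*\Y$ restricted to the spectral window $[0, x_{1,m})$.

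Next I would decompose $T_n^*\Y = (T_n^*\Y - S_n f^*_\cH) + S_n f^*_\cH$, so that $\norm{F_{x_{1,m}}\varphi_m(S_n) T_n^*\Y}$ splits into a \emph{noise term} and a \emph{signal term}. For the noise term, I insert the identity $(S+\lambda I)^{-1/2}(S+\lambda I)^{1/2}$, use the first component of event {\bf B($\lambda$)} to control $(S+\lambda I)^{-1/2}(T_n^*\Y - S_n f^*_\cH)$ by $\delta(\lambda)$, and apply \eqref{eq:boundin0} to pass from $(S+\lambda I)^{1/2}$ to $(S_n+\lambda I)^{1/2}$ at the price of a factor~$\Lambda$. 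What remains is the operator norm $\norm{F_{x_{1,m}}\varphi_m(S_n)(S_n+\lambda I)^{1/2}} \leq \sup_{x\in[0,x_{1,m}]}(x+\lambda)^{1/2}\varphi_m(x)$, which by \eqref{eq:boundphi} with $\nu=1$ and $\nu=0$ is bounded by $\abs{p'_m(0)}^{-1/2} + \lambda^{1/2}$. This delivers the second summand in the announced inequality.

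For the signal term, I invoke the source condition $f^*_\cH = S^\mu w$ with $\norm{w}\leq \kappa^{-\mu-1/2}\rho$ established in the preamble to Section~\ref{se:prinner}, reducing the problem to bounding $\norm{F_{x_{1,m}}\varphi_m(S_n) S_n S^\mu}$. This is exactly the setting of \eqref{eq:boundin2} from Lemma~\ref{le:lemma0}, applied to the function $\phi(x) := x\varphi_m(x)\mathbf{1}_{[0,x_{1,m})}(x)$ and power $\nu = \mu$, giving
\[
\norm{F_{x_{1,m}}\varphi_m(S_n) S_n S^\mu} \leq \Lambda^2 \paren{\sup_{x\in[0,x_{1,m}]} x^{\mu+1}\varphi_m(x) + (\mu\vee 1) Z_\mu(\lambda) \sup_{x\in[0,x_{1,m}]} x\varphi_m(x)}.
\]
Applying \eqref{eq:boundphi} once more, this time with $\nu=2(\mu+1)$ and $\nu=2$ (and taking square roots), yields the bounds $c(\mu)\abs{p'_m(0)}^{-(\mu+1)}$ and $c\abs{p'_m(0)}^{-1}$ respectively, producing the first summand of the claimed inequality after multiplication by $\norm{w}\leq \kappa^{-\mu-1/2}\rho$.

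The only real subtlety I anticipate is the handling of $S^\mu$ in the signal term when $\mu>1$: direct commutation between $\varphi_m(S_n)$ and $S^\mu$ is unavailable, and one cannot simply replace $S^\mu$ by $S_n^\mu$ through a warped resolvent trick since $\mu$ may exceed $1/2$. This is precisely why \eqref{eq:boundin2} was set up in Lemma~\ref{le:lemma0} using the Hilbert--Schmidt Lipschitz estimate on operator powers and the third component $\norm{S-S_n}_{HS}\leq \kappa\Delta$ of event {\bf B($\lambda$)}, which is absorbed into $Z_\mu(\lambda)$ for $\mu>1$; once that device is in place, gathering the two summands is routine.
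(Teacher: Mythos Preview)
Your proof is correct and follows essentially the same approach as the paper: apply \eqref{eq:Nem1} to pass to $\norm{F_{x_{1,m}}\varphi_m(S_n)T_n^*\Y}$, split $T_n^*\Y$ into $S_n f^*_\cH$ plus the noise $T_n^*\Y - S_n f^*_\cH$, handle the noise via the first component of {\bf B($\lambda$)} and \eqref{eq:boundphi} with $\nu=0,1$, and handle the signal via the source condition together with \eqref{eq:boundin2} and \eqref{eq:boundphi} with $\nu=2(\mu+1),\,2$. Your remark on the role of \eqref{eq:boundin2} in absorbing the $S^\mu$-to-$S_n^\mu$ discrepancy when $\mu>1$ is exactly the point.
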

\begin{proof}
Using \eqref{eq:Nem1} of Lemma~\ref{lem:orthpol} and the notation therein,
it holds
\begin{align*}
\norm{T_n^*(T_n f_m -\Y)} = \norm{p_m(S_n)T_n^*\Y}
& \leq \norm{F_{x_{1,m}} \varphi_m(S_n) T_n^* \Y }\\
& \leq \norm{F_{x_{1,m}} \varphi_m(S_n) S_n f^*_\cH} + \norm{F_{x_{1,m}} \varphi_m(S_n)
(T_n^* \Y - S_n f^*_\cH)}\\
& := (I) + (II).
\end{align*}
We start with controlling the second term:
\begin{align*}
(II) = \norm{F_{x_{1,m}} \varphi_m(S_n) (T_n^* \Y - S_n f^*_\cH)}
& = \norm{F_{x_{1,m}} \varphi_m(S_n) (S+\lambda I)^{\frac{1}{2}}
  (S+\lambda I)^{-\frac{1}{2}}(T_n^* \Y - S_n f^*_\cH)}  \\
& \leq \norm{F_{x_{1,m}} \varphi_m(S_n) (S_n+\lambda I)^{\frac{1}{2}}}
\Lambda \delta(\lambda)\\
& \leq \paren{\sup_{x\in[0, x_{1,m}]} x^{\frac{1}{2}} \varphi_m(x) +
\lambda^{\frac{1}{2}}\sup_{x\in[0, x_{1,m}]} \varphi_m(x)} \Lambda
\delta(\lambda)\\
& \leq \paren{\abs{p'_m(0)}^{-\frac{1}{2}} + \lambda^{\frac{1}{2}}}
\Lambda \delta(\lambda)\,,
\end{align*}
where we used \eqref{eq:boundin0}, the first component of event {\bf B$(\lambda)$},
and in the last line 
inequality \eqref{eq:boundphi} with $\nu=0,1$\,.
For the first term, we use assumption {\bf SC($r,\rho$)}, then \eqref{eq:boundin2}:
\begin{align*}
(I) = \norm{F_{x_{1,m}} \varphi_m(S_n) S_n f^*_\cH} & =
\norm{F_{x_{1,m}} \varphi_m(S_n) S_n S^\mu w}\\
& \leq \Lambda^{2} \paren{\sup_{t\in[0,x_{1,m}]} t^{\mu+1} \varphi_m(t)
+ c(\mu) Z_\mu(\lambda) \sup_{t\in[0,x_{1,m}]} t \varphi_m(t)} \kappa^{-\mu-\frac{1}{2}}\rho\\
& \leq c(\mu) \Lambda^2 \paren{ \abs{p'_m(0)}^{-(\mu+1)} + Z_\mu(\lambda) \abs{p'_m(0)}^{-1}}
\kappa^{-\mu-\frac{1}{2}}\rho\,,
\end{align*}
where for the last inequality we applied \eqref{eq:boundphi} with $\nu = 2(\mu+1)$\,, $\nu=2$\,.
\end{proof}

We now consider the sequence of polynomials $p_m^{(2)}$ that are orthogonal 
with respect to the scalar product $[.,.]_{(2)}$ (see Lemma~\ref{lem:orthpol}, point (iv) ).
For notational convenience and compatibility below we define $x_{1,0}=x_{1,0}^{(2)}=\infty$\,.
\begin{lemma}
\label{le:relpol}
Assume condition  {\bf SC($r,\rho$)} holds, $r\geq\frac{1}{2}$\,.
For any $\lambda>0$\,, if the event {\bf B($\lambda$)} is satisfied,
then for any iteration step $1 \leq m \leq n_{\Y}$\,, and any
$\eps \in (0,x_{1,m-1})$\,:
\begin{align}
\brac{p_{m-1},p_{m-1}}_{(0)}^{\frac{1}{2}} &
= \norm{p_{m-1}(S_n)T_n^*\Y} \nonumber \\
& \leq \Lambda (\eps+\lambda)^\frac{1}{2}
\delta(\lambda) + c(\mu)\Lambda^2\eps\paren{\eps^\mu +
Z_\mu(\lambda)  }\kappa^{-\mu-\frac{1}{2}}\rho +
\eps^{-\frac{1}{2}}\brac{p_{m-1}^{(2)},p_{m-1}^{(2)}}^{\frac{1}{2}}_{(1)}\,.
\label{eq:le3}
\end{align}
\end{lemma}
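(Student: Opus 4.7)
The plan is to exploit the minimizing property of $p_{m-1}$ with respect to $[.,.]_{(0)}$, replace it by the test polynomial $p_{m-1}^{(2)}$, and then decompose according to whether the spectrum of $S_n$ lies above or below $\eps$. More precisely, since $p_{m-1} \in \cP_{m-1}^0$ is the orthogonal projection of the origin onto $\cP_{m-1}^0$ in the $[.,.]_{(0)}$ inner product (as established in the discussion preceding Lemma~\ref{lem:orthpol}), and $p_{m-1}^{(2)} \in \cP_{m-1}^0$ as well, I would start from
\[
\norm{p_{m-1}(S_n) T_n^* \Y}^2 = [p_{m-1},p_{m-1}]_{(0)} \leq [p_{m-1}^{(2)},p_{m-1}^{(2)}]_{(0)} = \norm{p_{m-1}^{(2)}(S_n) T_n^* \Y}^2.
\]
Then, using that $F_\eps$ and $F_\eps^\perp = I - F_\eps$ are complementary orthogonal projections commuting with $p_{m-1}^{(2)}(S_n)$, apply a Pythagoras decomposition to bound the right-hand side by $\norm{F_\eps p_{m-1}^{(2)}(S_n) T_n^* \Y}^2 + \norm{F_\eps^\perp p_{m-1}^{(2)}(S_n) T_n^* \Y}^2$.

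For the high-eigenvalue part, I would argue as in the handling of term (III) of Lemma~\ref{le:lemma2}: on the spectrum $\geq \eps$, bound $p_{m-1}^{(2)}(t)^2 \leq \eps^{-1} p_{m-1}^{(2)}(t)^2 t$, so that
\[
\norm{F_\eps^\perp p_{m-1}^{(2)}(S_n) T_n^* \Y}^2 \leq \eps^{-1} \int_0^\kappa p_{m-1}^{(2)}(t)^2\, t\, d\mu_n^{(0)}(t) = \eps^{-1} [p_{m-1}^{(2)},p_{m-1}^{(2)}]_{(1)},
\]
which yields exactly the third term of \eqref{eq:le3} after taking square roots.

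For the low-eigenvalue part, the crucial observation is that $\eps < x_{1,m-1} \leq x_{1,m-1}^{(2)}$ by Lemma~\ref{lem:orthpol}(iv), and hence by Lemma~\ref{lem:orthpol}(ii) applied to the $(2)$-sequence, $p_{m-1}^{(2)}$ is positive and bounded by $p_{m-1}^{(2)}(0)=1$ on $[0,\eps)$. I would then split $T_n^*\Y = S_n f^*_\cH + (T_n^*\Y - S_n f^*_\cH)$ and treat the two contributions exactly as in Lemma~\ref{le:lemma1}. For the source-condition contribution, substitute $f^*_\cH = S^\mu w$ and apply inequality \eqref{eq:boundin2} to the function $\phi(t) = \mbf{1}_{[0,\eps)}(t) p_{m-1}^{(2)}(t)\, t$, using $\sup_{t\in[0,\eps)} t^{\mu+1} p_{m-1}^{(2)}(t) \leq \eps^{\mu+1}$ and $\sup_{t\in[0,\eps)} t\, p_{m-1}^{(2)}(t) \leq \eps$ together with $\norm{w} \leq \kappa^{-\mu-1/2}\rho$; this yields the middle term $c(\mu)\Lambda^2\eps(\eps^\mu + Z_\mu(\lambda))\kappa^{-\mu-1/2}\rho$. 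For the noise contribution, insert $(S_n+\lambda I)^{1/2}(S_n+\lambda I)^{-1/2}(S+\lambda I)^{1/2}(S+\lambda I)^{-1/2}$, use \eqref{eq:boundin0} with $\nu=1/2$ for the middle factor, the bound $\sup_{t\in[0,\eps)} p_{m-1}^{(2)}(t)(t+\lambda)^{1/2} \leq (\eps+\lambda)^{1/2}$, and finally the first component of event {\bf B}$(\lambda)$ to produce $\Lambda(\eps+\lambda)^{1/2}\delta(\lambda)$.

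Combining the two parts through the elementary inequality $\sqrt{a^2 + b^2} \leq a + b$ for non-negative $a,b$ then yields \eqref{eq:le3}. The only slightly subtle point—where I would spend some care—is the replacement step $p_{m-1} \to p_{m-1}^{(2)}$: one must verify explicitly that $p_{m-1}^{(2)} \in \cP^0_{m-1}$ and that the minimizer characterization of $p_{m-1}$ applies (which it does since $m-1 \leq n_\Y - 1$ guarantees that $[.,.]_{(0)}$ is an actual scalar product on $\cP_{m-1}$). Everything else reduces to bookkeeping along the lines of Lemmas~\ref{le:lemma0}--\ref{le:lemma1}.
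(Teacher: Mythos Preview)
Your proposal is correct and follows essentially the same route as the paper's proof: both use the optimality of $p_{m-1}$ to replace it by $p_{m-1}^{(2)}$, split into $F_\eps$ and $F_\eps^\perp$ parts, bound the high-eigenvalue part by $\eps^{-1/2}[p_{m-1}^{(2)},p_{m-1}^{(2)}]_{(1)}^{1/2}$, and handle the low-eigenvalue part via $|p_{m-1}^{(2)}|\leq 1$ on $[0,\eps)$ together with the decomposition $T_n^*\Y = S_nf^*_\cH + (T_n^*\Y - S_nf^*_\cH)$, \eqref{eq:boundin0}, \eqref{eq:boundin2}, and the first component of {\bf B}$(\lambda)$. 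The only cosmetic difference is that the paper uses the triangle inequality directly rather than Pythagoras followed by $\sqrt{a^2+b^2}\leq a+b$, and drops the factor $p_{m-1}^{(2)}$ immediately to reduce to bounding $\norm{F_\eps T_n^*\Y}$.
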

\begin{proof}
By the optimality property defining the CG algorithm,
\begin{align*}
\norm{p_{m-1}(S_n)T_n^*\Y} 
\leq \norm{p^{(2)}_{m-1}(S_n)T_n^*\Y} & \leq \norm{F_{\eps}p^{(2)}_{m-1}(S_n)T_n^*\Y } + \norm{F_\eps^\perp
p^{(2)}_{m-1}(S_n)T_n^*\Y}\\
& \leq \norm{F_\eps T_n^* \Y} + \eps^{-\frac{1}{2}}
\norm{p^{(2)}_{m-1}(S_n)S_n^{\frac{1}{2}}T_n^*\Y} \\
&= \norm{F_\eps T_n^* \Y} +\eps^{-\frac{1}{2}}\brac{p_{m-1}^{(2)},p_{m-1}^{(2)}}^{\frac{1}{2}}_{(1)}
\end{align*}
For the last inequality, we have used the fact that
$|p^{(2)}_{m-1}(x)| \leq 1$ for $x \in [0,x_{m-1}^{(2)}]$\,,
(since $p^{(2)}_{m-1}(0)=1$ and $p^{(2)}_{m-1}$ is nonincreasing on
$[0,x_{m-1}^{(2)}]$, the case $m=1$ being trivial)
along with the assumption $0 < \eps < x_{1,m-1}$\,, as well as
$x_{1,m-1} \leq x_{1,m-1}^{(2)}$\, (for both of these properties see Lemma~\ref{lem:orthpol}, point (iv)).
We now bound
\begin{align*}
\norm{F_\eps T_n^* \Y } & \leq \norm{F_\eps (T_n^* \Y - S_n
  f^*_\cH)} + \norm{F_\eps S_n f^*_\cH} \\
& \leq \norm{F_\eps (S_n+\lambda I)^\frac{1}{2}}
\norm{(S_n+\lambda I)^{-\frac{1}{2}}(T_n^*\Y - S_n f^*_\cH)}
+ \norm{F_\eps S_n S^\mu w} \\
& \leq \Lambda(\eps+\lambda)^{\frac{1}{2}}\delta(\lambda) + \norm{F_\eps S_n S^\mu w}\,\\
& \leq \Lambda(\eps+\lambda)^{\frac{1}{2}}\delta(\lambda) + c(\mu)\Lambda^2\eps\paren{\eps^\mu +
Z_\mu(\lambda)  }\kappa^{-\mu-\frac{1}{2}}\rho,
\end{align*}
where we have used~\eqref{eq:boundin2} for the last line.
\end{proof}

{\bf Proof of Theorem~\ref{thm:inner}.}

We set 
\begin{equation}
\label{eq:lambdastar}
\blambda_* = \paren{\frac{4D \log 6 \gamma^{-1}}{\sqrt{n}}}^{\frac{2}{2\mu+s+1}}\,, 
\text{ and } \lambda_* := \kappa \blambda_*\,.
\end{equation}
(This normalization was introduced by \citealp{Cap10}.)
The assumed lower bound \eqref{eq:condninner} on $n$
ensures $\blambda_* \leq 1$\,.
We rewrite equivalently the discrepancy stopping rule as follows: for some fixed $\tau>0$\,,
\begin{equation}
\label{eq:apriorisr}
\wh{m} := \min\set{m \geq 0 : \norm{T_n^*(T_n f_m - \Y)} \leq
  (2+\tau)\lambda_*^{\frac{1}{2}} \delta(\lambda_*)}\,,
\end{equation}
where
\begin{equation}
\label{eq:deltastar}
\delta(\lambda_*):=  
\frac{3}{4} M \blambda_*^{\mu+\frac{1}{2}}\,.
\end{equation}
Observe that the above
$\tau>0$ is related from the constant $\tau'>3/2$ in \eqref{eq:disc2}
via $\tau = \frac{4}{3}(\tau' - \frac{3}{2})$\,.


We first check that event {\bf B($\lambda_*$)}, is satisfied with
large probability, using for this concentration
results which were recalled in Lemma~\ref{prop:relconc}.
From inequality~\eqref{eq:bybn}, with probability
$1-\gamma/3$\, we have
\begin{align}
\norm{(S+\lambda_* I)^{-\frac{1}{2}}(T_n^* \Y - S_n f^*_{\cH})} & \leq
 2M\paren{\sqrt{\frac{\cN(\lambda_*)}{n}} + \frac{2\sqrt{\kappa}}{\sqrt{\lambda_*}n}}
\log \frac{6}{\gamma} \nonumber \\
& \leq \frac{2M}{\sqrt{n}} D
\blambda_*^{-\frac{s}{2}} \paren{1 +
  \frac{1}{2D^2}\paren{\frac{4D}{\sqrt{n}} \log \frac{6}{\gamma}}
\blambda_*^{\frac{s-1}{2}}} \log
\frac{6}{\gamma} \nonumber \\
& \leq \frac{M}{2}
\blambda_*^{\mu+\frac{1}{2}} \paren{1+\frac{1}{2D^2}\blambda_*^{\mu+s}}
\nonumber \\
&  \leq \frac{3}{4} M \blambda_*^{\mu+\frac{1}{2}}= \delta(\lambda_*)\,,
\label{eq:defdelta1}
\end{align}
where we have used {\bf ED($s,D$)}, \eqref{eq:lambdastar} and the
assumptions $D\geq 1$ and $\blambda_* \leq 1$\,, as well as the fact that
$\log 6\gamma^{-1}\geq 1$\,. This ensures the first component
of {\bf B$(\lambda_*)$} is satisfied with probability $1-\gamma/3$\,.
We now turn to the second component. Inequality \eqref{eq:reloperror}
along with a repetition of the above reasoning yields that with
probability $1-\gamma/3$\,:
\[
\norm{(S+\lambda_* I)^{-\frac{1}{2}}(S_n-S)}_{HS} \leq
\frac{\sqrt{\kappa}}{M} \delta(\lambda_*)\,,
\]
so that
\[
\norm{(S+\lambda_*I)^{-1}(S_n-S)} \leq
\frac{\sqrt{\kappa}}{M} \lambda_*^{-\frac{1}{2}} \delta(\lambda_*)  = \frac{3}{4} \blambda_*^\mu \leq \frac{3}{4}\,.
\]
Observe that
\[
(S+\lambda_* I)(S_n+\lambda_* I)^{-1} = \paren{ (S_n-S)(S+\lambda_* I)^{-1} + I}^{-1}
\]
and use the inequality $\norm{(I-A)^{-1}} = \norm{\sum_{k\geq 0} A^k} \leq (1-\norm{A})^{-1}$ for $\norm{A} < 1$\,,
to obtain that the second component in {\bf B($\lambda_*$)} is satisfied
with $\Lambda:=2$\, (with probability $1-\gamma/3$). 

Finally, equation \eqref{eq:hoeffop} implies that the third component in {\bf B($\lambda_*$)}
is satisfied with probability $1-\gamma/3$\,, with
\begin{equation}
\label{eq:defDelta}
\Delta := 4 \sqrt{\frac{\log 6\gamma^{-1}}{n}}\,.
\end{equation}
To conclude, by the union bound, the three components of event {\bf B$(\lambda_*)$} are satisfied simultaneously 
with probability larger than $1-\gamma$\,. We assume for the rest of the proof that this event is satisfied.

The structure of the proof is now as follows: we aim at bounding the error of the estimator
using the inequality of Lemma~\ref{le:lemma2}. In this upper bound, the residue term
is controlled by definition of the stopping rule. 
The only and most difficult remaining quantity to control is then $\abs{p'_{\wh{m}}(0)}$\,.
Using Lemma~\ref{le:lemma1} on the residue at iteration $\wh{m}-1$\,,
and the definition of the stopping criterion, will allow to upper bound $\abs{p'_{\wh{m}-1}(0)}$\,;
finally Lemma~\ref{le:relpol} allows to relate iterations $\wh{m}-1$ and $\wh{m}$\,.

We will assume $\wh{m}\geq 1$\, for the remainder of the proof and postpone
to the end the (simpler) case $\wh{m}=0$\,.

{\bf First step:} upper bound on $\abs{p'_{\wh{m}-1}(0)}$\,.\\
 By definition of the stopping rule, we
have $\norm{T_n^*(T_n f_{\wh{m}-1} -\Y)} > (2 + \tau)
\lambda_*^\frac{1}{2} \delta(\lambda_*)$\,. Now applying this together
with the upper bound of Lemma \ref{le:lemma1} and rearranging, we get
\begin{align*}
\tau \lambda_*^\frac{1}{2} \delta(\lambda_*)
& \leq  c(\mu) \paren{\abs{p'_{\wh{m}-1}(0)}^{-(\mu+1)}
  + Z_\mu(\lambda_*)\abs{p'_{\wh{m}-1}(0)}^{-1}}
\kappa^{-\mu-\frac{1}{2}} \rho + 2 \abs{p'_{\wh{m}-1}(0)}^{-\frac{1}{2}} \delta(\lambda_*)\,\\
& \begin{aligned}
\leq c(\mu) \max & \left( \abs{p'_{\wh{m}-1}(0)}^{-\frac{1}{2}}
  \delta(\lambda_*), \rho \kappa^{-\mu-\frac{1}{2}}
    \abs{p'_{\wh{m}-1}(0)}^{-(\mu+1)}, \right.\\
&\left. \;\; \rho \kappa^{-\mu-\frac{1}{2}}
    Z_\mu(\lambda_*)\abs{p'_{\wh{m}-1}(0)}^{-1} \right)\,.
\end{aligned}
\end{align*}
We examine in succession the possibilities that the maximum in the above
expression is attained for each of the terms which comprise it. If the
first term attains the maximum, this implies $|p'_{\wh{m}-1}(0)| \leq c(\mu) \tau^{-2} \lambda_*^{-1}\,.$
If the second term attains the maximum, this entails
\[
c(\mu) \rho \kappa^{-\mu-\frac{1}{2}}
    \abs{p'_{\wh{m}-1}(0)}^{-(\mu+1)} \geq \tau \lambda_*^\frac{1}{2} \delta(\lambda_*)\,,
\]
which using \eqref{eq:deltastar} yields $
\abs{p'_{\wh{m}-1}(0)} \leq
c(\mu,\tau) \paren{\frac{\rho}{M}}^\frac{1}{\mu+1} \lambda_*^{-1}\,.
$
Finally, if the third term attains the maximum, we have
\[
c(\mu) \rho Z_\mu(\lambda_*) \kappa^{-\mu-\frac{1}{2}}
    \abs{p'_{\wh{m}-1}(0)}^{-1} \geq \tau \lambda_*^\frac{1}{2} \delta(\lambda_*)\,,
\]
which using \eqref{eq:deltastar} yields
$\abs{p'_{\wh{m}-1}(0)} \leq
c(\mu,\tau) \frac{\rho}{M} \lambda_*^{-\mu-1}
Z_\mu(\lambda_*)\,.$
We now establish the inequality
\begin{equation}
\label{eq:boundZ}
Z_\mu(\lambda_*) \lambda_*^{-\mu} \leq 1\,.
\end{equation}
The inequality is trivial if $\mu \leq 1$ given the definition of $Z_\mu(\lambda_*)$ in \eqref{eq:defZ}.
If $\mu >1$ holds, from the definition \eqref{eq:defDelta}, it holds that $\Delta \leq
\blambda_*^{\frac{2\mu+s+1}{2}}$\, (using $D\geq 1$\,, $\log 6\gamma^{-1}\geq 1$); hence
\[
Z_\mu(\lambda_*) \lambda_*^{-\mu} =
\Delta \blambda_*^{-\mu}
\leq \blambda_*^{\frac{s+1}{2}}
\leq 1\,.
\]
Gathering all three cases, we obtain that it always holds that
\begin{equation}
\label{eq:boundppmm1}
\abs{p'_{\wh{m}-1}(0)} \leq c(\mu,\tau) \max\paren{\frac{\rho}{M},1} \lambda_*^{-1}\,.
\end{equation}

{\bf Second step:} upper bound on $\abs{p'_{\wh{m}}(0)}$\,. For this we use the result of the first step and relate $\abs{p'_{\wh{m}-1}(0)}$ to $\abs{p'_{\wh{m}}(0)}$\,
using property \eqref{eq:pcompar} of orthogonal polynomials, which we recall here for convenience:
\begin{equation}
\label{eq:pcompar2}
\abs{{p_{m-1}}'(0) - {p_{m}}'(0)}
\leq \frac{\brac{p_{m-1},p_{m-1}}_{(0)}}{\brac{p^{(2)}_{m-1},p^{(2)}_{m-1}}_{(1)}}\,.
\end{equation}
To upper bound the above quantity, we apply Lemma \ref{le:relpol}
with the choice $\lambda=\lambda_*$ and
\newcommand{\epsos}{\eps_{0}}
\newcommand{\azero}{a_0}
\[
\eps = \epsos := \azero(\mu,\tau) \min \paren{\frac{M}{\rho},1} \lambda_*\,,
\]
where $0< \azero(\mu,\tau)\leq 1$ will be chosen small enough in order to satisfy some
constraints to be specified below. (We must insist here for the consistency of the argument
that contrarily to the notation $c(\ldots)$\,, the notation $\azero(\mu,\tau)$ denotes
a fixed value that does not change throughout the proof.) 
The first constraint is the
requirement $\epsos \in (0,x_{1,m-1})$ in order to apply Lemma~\ref{le:relpol}. For this, it can be seen from \eqref{eq:boundppmm1}
that $\azero(\mu,\tau)$ can be chosen small enough (namely smaller than
the inverse of the constant $c(\mu,\tau)$ of equation \eqref{eq:boundppmm1})\,, to ensure
\[
\epsos \leq \abs{p'_{m-1}(0)}^{-1} \leq x_{1,m-1}\,,
\]
the second inequality above is an easy consequence of the fact that $p_{m-1}$
is convex on $[0,x_{1,m-1}]$ and $p_{m-1}(0)=1$\,. We can now apply
Lemma~\ref{le:relpol} and use inequality \eqref{eq:le3}. We
turn to upper bound the following quantity appearing on the RHS of \eqref{eq:le3}:
\begin{multline}
\Lambda(\epsos+\lambda_*)^\frac{1}{2}
\delta(\lambda_*) + c(\mu)\Lambda^2\epsos\paren{\epsos^\mu +
  Z_\mu(\lambda_*)}\kappa^{-\mu-\frac{1}{2}}\rho\\
\begin{aligned}
& \leq 2(\azero(\mu,\tau)+1)^\frac{1}{2}\lambda_*^{\frac{1}{2}}\delta(\lambda_*)
+ c(\mu)\azero(\mu,\tau)\min \paren{{\rho},M} \lambda_*^{\frac{1}{2}} \blambda_*^{\mu+\frac{1}{2}}\\
& \leq  (c(\mu)\azero(\mu,\tau)+2)\lambda_*^{\frac{1}{2}}\delta(\lambda_*)\,,
\end{aligned}
\label{eq:beps}
\end{multline}
where we have used $\Lambda=2$\,, the definition \eqref{eq:deltastar} for
$\delta(\lambda_*)$ and inequality $Z_\mu(\lambda_*) \leq
\lambda_*^{\mu}$\,, see \eqref{eq:boundZ}\,.
Now, we can choose $\azero(\mu,\tau)$ small enough so that in addition to the previous constraint, the factor in the last display
satisfies $c(\mu)\azero(\mu,\tau)\leq \frac{\tau}{2}$\,.
The definition of
the stopping rule entails
\begin{equation}
\label{eq:resr}
\brac{p_{m-1},p_{m-1}}_{(0)}^{\frac{1}{2}} = \norm{T_n^*(T_n f_{\wh{m}-1} -\Y)} > (2 + \tau)
\lambda_*^\frac{1}{2} \delta(\lambda_*)\,.
\end{equation}
Now combining
\eqref{eq:le3}, \eqref{eq:resr} and \eqref{eq:beps}, we obtain
\begin{align*}
\brac{p_{m-1},p_{m-1}}_{(0)}^{\frac{1}{2}} 
&\leq (2 + \tau/2) \lambda_*^\frac{1}{2} \delta(\lambda_*) +
\epsos^{-\frac{1}{2}}\brac{p_{m-1}^{(2)},p_{m-1}^{(2)}}^{\frac{1}{2}}_{(1)}\\
&\leq \frac{2 + \tau/2}{2+\tau} \brac{p_{m-1},p_{m-1}}_{(0)}^{\frac{1}{2}} +
\epsos^{-\frac{1}{2}}\brac{p_{m-1}^{(2)},p_{m-1}^{(2)}}^{\frac{1}{2}}_{(1)}\,,
\end{align*}
so that
\[
\paren{2+4\tau^{-1}}^{-1}\brac{p_{m-1},p_{m-1}}_{(0)}^{\frac{1}{2}} \leq
\epsos^{-\frac{1}{2}} \brac{p_{m-1}^{(2)},p_{m-1}^{(2)}}_{(1)}^{\frac{1}{2}}\,;
\]
using this inequality in relation with \eqref{eq:pcompar2} and \eqref{eq:boundppmm1}, we obtain
\begin{equation}
\label{eq:boundpp}
\abs{p_{\wh{m}}'(0)}  \leq \abs{p_{\wh{m}-1}'(0)} + c(\tau)\epsos^{-1} \leq c(\mu,\tau) \max\paren{\frac{\rho}{M},1} \lambda_*^{-1}\,.
\end{equation}

{\bf Final step.} We want to apply the main error bound of Lemma~\ref{le:lemma2} with
$\lambda=\lambda_*$ and $\eps=\eps_* = a(\mu,\tau) \min \paren{\frac{M}{\rho},1} \lambda_*$\,.
Note that $\eps_*$ is different from $\epsos$ considered above; in fact $\eps_*$ must
now satisfy the constraint $\eps_* \in (0,x_{1,m})$ in order to be able to apply the lemma. 
In view of~\eqref{eq:boundpp}, we can choose $a(\mu,\tau) \in (0,1]$
small enough so as to ensure 
\[
\eps_* \leq \abs{p'_{m}(0)}^{-1} \leq x_{1,m}\,,
\]
similarly to the previous step (but now at iteration $m$ instead of $m-1$).
Recall that in the notation of Lemma~\ref{le:lemma2}, $\wt{\eps}_* = \min(\eps_*,\abs{p'_m(0)}^{-1})$\,,
so that with the above choice we have $\wt{\eps}_* = \eps_*$\,.
We now apply Lemma~\ref{le:lemma2}, plug in the inequality (by definition of the stopping rule)
\[
\norm{T_n^*(T_nf_{\wh{m}}-\Y)} \leq (2+\tau) \lambda_*^{\frac{1}{2}} \delta(\lambda_*)\,,
\]
and obtain, using again \eqref{eq:boundZ}:
\begin{align*}
\norm{S^{\frac{1}{2}-\theta}(f_m-f^*_\cH)} & \leq  c(\mu,\tau) \paren{
{\wt{\eps}_*}^{-1} \lambda_*^{1-\theta} \delta(\lambda_*)
+\lambda_*^{\frac{1}{2}-\theta+\mu}
\kappa^{-\mu-\frac{1}{2}} \rho + \eps_*^{-1}  \lambda_*^{1-\theta} 
\delta(\lambda_*)}\\
& \leq c(\mu,\tau) \paren{
\max\paren{\frac{\rho}{M},1}  \lambda_*^{-\theta} \delta(\lambda_*) 
+\lambda_*^{\frac{1}{2}-\theta+\mu} \kappa^{-\mu-\frac{1}{2}} \rho }\\
& \leq c(\mu,\tau)
\max\paren{\rho,M}  \lambda_*^{-\theta} \blambda_*^{\mu+\frac{1}{2}}\\
& = c(\mu,\tau)
\max\paren{\rho,M} \kappa^{-\theta} \paren{\frac{4D}{\sqrt{n}}\log \frac{6}{\gamma}}^{\frac{2\mu+1-2\theta}{2\mu+s+1}}\\
& = c(\mu,\tau)
\max\paren{\rho,M} \kappa^{-\theta} \paren{\frac{4D}{\sqrt{n}}\log \frac{6}{\gamma}}^{\frac{2(r-\theta)}{2r+s}}\,.
\end{align*}
If $\wh{m}=0$\,, we can apply directly Lemma \ref{le:lemma2} as above
without requiring the two previous steps, since
in this case $p'_0(0)=0$\,, so that we obtain the same final bound.

\subsection{Proof of Theorem~\ref{thm:outer}}
\label{proof:outer}

\newcommand{\fta}{f^\lambda_{\cH}}

In the case of the ``outer'' rates of convergence, i.e. condition {\bf SC}($r,\rho$)
holds with $r\in(0,\frac{1}{2})$\,, we recall that the target function $f^*$
is not representable as an element of the Hilbert space $\cH$\,. This means
many arguments used in Section~\ref{se:prinner} can't be used directly.
To alleviate this, we consider an approximation of $f^*$ by a function belonging
to $\cH$ defined as
\begin{equation}
\label{eq:fta}
\fta := (S_n + \lambda I)^{-1} T^* f^*\,.
\end{equation}
Similarly to the previous proof, we define an event where the estimation error is
controlled in an appropriate sense:
\newcommand{\wdel}{\wt{\delta}}
\[
\text{ \bf B'}(\lambda) :
\begin{cases}
\norm{(S+\lambda I)^{-\frac{1}{2}}(T_n^*\Y-Tf^*)} & \leq \delta(\lambda)\,,\\
\norm{(S+\lambda I)^{-\frac{1}{2}}(S_n-S)}_{HS} & \leq 
\wdel(\lambda)\,,\\
\displaystyle \norm{(S+\lambda I)(S_n
  +\lambda I)^{-1}}_{HS} & \leq \Lambda^2,
\end{cases}
\]
Notice that the first part of the event is slightly different from the corresponding
part of {\bf B}$(\lambda)$\,; this is because we will be using concentration inequality \eqref{eq:byby} rather than \eqref{eq:bybn}, the latter only being available for
$r\geq \frac{1}{2}$\,.

Our first lemma controls the approximation from $T\fta$ to the target $f^*$\,.

\begin{lemma}
\label{le:apr1}
Assume condition {\bf SC}$(r,\rho)$ holds, $r<\frac{1}{2}$\,.
Let $\theta$ be fixed, $\theta \in [ 0,r)$\,. 
For any $\lambda>0$\,, if the event {\bf B'($\lambda$)} is satisfied,
then
\[
\norm{K^{-\theta}(T\fta-f^*)} \leq \kappa^{-r} \rho \lambda^{r-\theta}
\paren{ 1 + \Lambda^2  \lambda^{-\frac{1}{2}}\wdel(\lambda)}\,,
\]
where $\fta$ is defined in \eqref{eq:fta}\,.
\end{lemma}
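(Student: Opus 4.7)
The strategy is a bias--variance type decomposition adapted to the outer case $f^* \notin \cH$. Starting from $\fta = (S_n+\lambda I)^{-1}T^* f^*$, add and subtract $T(S+\lambda I)^{-1}T^* f^*$ and combine the resolvent identity with the switching rule $T(S+\lambda I)^{-1}T^* = K(K+\lambda I)^{-1}$ (a consequence of $T\phi(S) = \phi(K)T$) to obtain
\[
T\fta - f^* = T(S_n+\lambda I)^{-1}(S-S_n)(S+\lambda I)^{-1}T^* f^* \;-\; \lambda(K+\lambda I)^{-1} f^*.
\]
After applying $K^{-\theta}$ and substituting $f^* = K^r u$, the bias summand becomes $-\lambda(K+\lambda I)^{-1}K^{r-\theta} u$. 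The spectral bound $\sup_{t\in[0,\kappa]} \lambda t^{r-\theta}/(t+\lambda) \leq \lambda^{r-\theta}$, valid because $r-\theta\in(0,1)$, together with $\norm{u}_{2,\nu}\leq \kappa^{-r}\rho$, yields a contribution at most $\lambda^{r-\theta}\kappa^{-r}\rho$.

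For the perturbation summand, first push $T^*$ outward via the intertwining $(S+\lambda I)^{-1}T^* = T^*(K+\lambda I)^{-1}$; then reduce the $\mathcal{L}^2(\nux)$-norm to an $\cH$-norm using the identity $\norm{K^{-\theta}Th}_{2,\nux} = \norm{S^{\frac{1}{2}-\theta}h}_\cH$ for $\theta\in[0,\tfrac{1}{2}]$, which follows from $T^*K^\alpha = S^\alpha T^*$ (equivalently, from the partial isometry $V=K^{-\frac{1}{2}}T$ satisfying $VS^\alpha = K^\alpha V$). This reduces the problem to bounding
\[
\norm{S^{\frac{1}{2}-\theta}(S_n+\lambda I)^{-1}(S-S_n)\,T^*(K+\lambda I)^{-1}f^*}_\cH,
\]
which I split into three factors. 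The left operator norm $\norm{S^{\frac{1}{2}-\theta}(S_n+\lambda I)^{-1}(S+\lambda I)^{\frac{1}{2}}}$ is at most $\Lambda^2\lambda^{-\theta}$, obtained by writing it as $S^{\frac{1}{2}-\theta}(S_n+\lambda I)^{-(\frac{1}{2}-\theta)}\cdot (S_n+\lambda I)^{-\theta}\cdot (S_n+\lambda I)^{-\frac{1}{2}}(S+\lambda I)^{\frac{1}{2}}$ and applying Lemma~\ref{le:lemma0}, inequality~\eqref{eq:boundin0} twice (with exponents $\frac{1}{2}-\theta$ and $\frac{1}{2}$) along with $\norm{(S_n+\lambda I)^{-\theta}}\leq \lambda^{-\theta}$. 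The middle Hilbert--Schmidt factor $\norm{(S+\lambda I)^{-\frac{1}{2}}(S-S_n)}_{HS}\leq \wdel(\lambda)$ comes directly from event~{\bf B'($\lambda$)}.

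The right factor $\norm{T^*(K+\lambda I)^{-1}f^*}_\cH$ requires care: taking adjoints in the partial isometry statement gives $T^* = V^* K^{\frac{1}{2}}$, so substituting the source condition and commuting through scalar functions of $K$ gives $T^*(K+\lambda I)^{-1}f^* = V^* K^{r+\frac{1}{2}}(K+\lambda I)^{-1}u$; the spectral bound $\sup_{t\in[0,\kappa]} t^{r+\frac{1}{2}}/(t+\lambda) \leq \lambda^{r-\frac{1}{2}}$ (valid since $r+\frac{1}{2}\in(\frac{1}{2},1)$ for $r<\frac{1}{2}$) and $\norm{V^*}\leq 1$ yield $\lambda^{r-\frac{1}{2}}\kappa^{-r}\rho$. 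Multiplying the three factors produces $\Lambda^2\wdel(\lambda)\lambda^{r-\theta-\frac{1}{2}}\kappa^{-r}\rho$ for the perturbation, and summing with the bias contribution $\lambda^{r-\theta}\kappa^{-r}\rho$ gives the announced bound $\kappa^{-r}\rho\lambda^{r-\theta}\paren{1 + \Lambda^2\lambda^{-\frac{1}{2}}\wdel(\lambda)}$. The main subtlety is obtaining the sharp $\lambda^{r-\frac{1}{2}}$ rate in the right factor: a naive bound $\norm{T^*u}_\cH \leq \sqrt{\kappa}\norm{u}_{2,\nu}$ combined with $\sup_t t^r/(t+\lambda)\leq \lambda^{r-1}$ would overshoot by a factor $\lambda^{-\frac{1}{2}}\kappa^{\frac{1}{2}}$, and the identity $T^* = V^* K^{\frac{1}{2}}$ is precisely what extracts an extra $K^{\frac{1}{2}}$ that combines with the source exponent to yield $K^{r+\frac{1}{2}}$ and the required decay.
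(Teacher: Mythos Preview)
Your proof is correct and follows essentially the same approach as the paper: the same bias--perturbation split, the same spectral calculus for the bias, and the same three-factor estimate for the perturbation via the resolvent identity and event~{\bf B'($\lambda$)}. The only cosmetic difference is that you apply the resolvent identity in the order $(S_n+\lambda I)^{-1}(S-S_n)(S+\lambda I)^{-1}$ while the paper uses $(S+\lambda I)^{-1}(S-S_n)(S_n+\lambda I)^{-1}$, which merely shifts where the factor~$\Lambda^2$ appears (in your left operator bound versus in the paper's bound~\eqref{eq:boundblah} on $\norm{(S_n+\lambda I)^{-1}T^*f^*}$).
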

\begin{proof}
We first write 
\[
\norm{K^{-\theta}(T\fta-f^*)} \leq
\norm{K^{-\theta}T((S_n+\lambda)^{-1} - (S+\lambda)^{-1})T^*f^*}
+ \norm{K^{-\theta}(T(S+\lambda)^{-1}T^*-I)f^*}
\]
We focus on the second term first:
\begin{align*}
\norm{K^{-\theta}(T(S+\lambda I)^{-1}T^*-I)f^*}
& = \norm{K^{-\theta}(K(K+\lambda I)^{-1}-I)K^r u} \\
& \leq \kappa^{-r} \rho \lambda \sup_{t \in [0,\kappa]}
\frac{t^{r-\theta}}{t + \lambda}\\
& \leq \kappa^{-r} \rho \lambda^{r-\theta}\,,
\end{align*}
where at the last line we used Lemma~\ref{le:suptik}
and the assumption that $r \in (0,\frac{1}{2})$ and $\theta\in(0,r)$
so that $r-\theta \in (0,\frac{1}{2})$\,.

For the first term, we use the second component of {\bf B'}$(\lambda)$\,:
\begin{multline*}
\norm{K^{-\theta}T((S_n+\lambda I)^{-1} - (S+\lambda I)^{-1})T^*f^*}\\
\begin{aligned}
& = \norm{(K^{-\frac{1}{2}}T)S^{\frac{1}{2}-\theta}(S+\lambda I)^{-1}(S-S_n)(S_n+\lambda I)^{-1}T^*K^ru} \\
& \leq 
\norm{S^{\frac{1}{2}-\theta}(S+\lambda I)^{-\frac{1}{2}}}
\norm{(S+\lambda I)^{-\frac{1}{2}} (S-S_n)} \norm{(S_n+\lambda I)^{-1}T^*f^*} \\ 
& \leq \Lambda^2 \kappa^{-r} \rho \lambda^{r-\theta-\frac{1}{2}}\wdel(\lambda)\,;
\end{aligned}
\end{multline*}
for the last inequality, we bounded the last factor by
\begin{align}
\norm{(S_n+\lambda I)^{-1}T^*f^*} & 
\leq \norm{(S_n+\lambda I)^{-1} (S+\lambda I)} 
\norm{(S+\lambda I)^{-1}T^*K^ru} \nonumber \\
& \leq \Lambda^2 \norm{(S+\lambda I)^{-1}S^{r+\frac{1}{2}}(S^{-\frac{1}{2}}T)u} \nonumber \\
& \leq \Lambda^2 \kappa^{-r} \rho \sup_{t\in[0,\kappa]} \frac{t^{r+\frac{1}{2}}}{t+\lambda} \nonumber\\
& \leq \Lambda^2 \kappa^{-r} \rho \lambda^{r-\frac{1}{2}}\,, \label{eq:boundblah}
\end{align}
where we have used Lemma~\ref{le:suptik} again (since $r+\frac{1}{2}<1$)\,.
Collecting the terms yields the conclusion.
\end{proof}

\begin{lemma}[Bounding the error, outer case]
\label{le:lemma2o}
Assume condition  {\bf SC($r,\rho$)} holds, $r< \frac{1}{2}$\,.
For any $\lambda>0$\,, if the event {\bf B'($\lambda$)} is satisfied,
then for any $\theta \in [0, r)$\,, for any iteration step $1 \leq m \leq n_{\Y}$\,, for any
$\eps \in (0,x_{1,m})$\,, and denoting $\wt{\eps}:=\min(\eps,\abs{p'_m(0)}^{-1})$\,:
\begin{align*}
\norm{K^{-\theta}(Tf_m-f^*)} \leq & c(\Lambda) \Bigg( \eps^{-1}\paren{\eps+\lambda}^{\frac{1}{2}-\theta} \norm{T_n^*(T_nf_m-\Y)} 
 + \wt{\eps}^{-1}\paren{\lambda + \wt{\eps}}^{1-\theta}\delta(\lambda)\\
& \qquad \qquad + \kappa^{-r}\rho \paren{\lambda+\eps}^{r-\theta} \big(1 + \lambda^{-\frac{1}{2}}\wdel(\lambda)
+ \wt{\eps}^{-1}\lambda
\big) \Bigg)
\end{align*}
For $m=0$\,, the above inequality is valid for any $\eps>0$\,.
\end{lemma}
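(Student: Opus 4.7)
The plan is to mirror the structure of the inner-case proof (Lemma~\ref{le:lemma2}), with the key adaptation that since $f^*\notin\cH$ we replace the role of $f^*_{\cH}$ by the Hilbert-space approximation $\fta=(S_n+\lambda I)^{-1}T^*f^*$ already introduced in Lemma~\ref{le:apr1}. I would start with the triangle inequality
\[
\norm{K^{-\theta}(Tf_m-f^*)} \leq \norm{K^{-\theta}T(f_m-\fta)} + \norm{K^{-\theta}(T\fta-f^*)},
\]
and control the second summand directly by Lemma~\ref{le:apr1}, which accounts for the $\kappa^{-r}\rho(\lambda+\eps)^{r-\theta}(1+\lambda^{-\frac{1}{2}}\wdel(\lambda))$ contribution (after absorbing the constant $\Lambda^2$ into $c(\Lambda)$). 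For the first summand, the switching rule $TS^{\frac{1}{2}-\theta}=K^{\frac{1}{2}-\theta}T$ combined with the isometric identity $\norm{Tg}_{2,\nux}=\norm{S^{\frac{1}{2}}g}_{\cH}$ gives the identification
\[
\norm{K^{-\theta}T(f_m-\fta)}_{2,\nux} = \norm{S^{\frac{1}{2}-\theta}(f_m-\fta)}_{\cH},
\]
which is legitimate since $\theta<r<\frac{1}{2}$. This reduces the first summand to a quantity of exactly the same shape as the one bounded in Lemma~\ref{le:lemma2}, with $f^*_\cH$ replaced by $\fta$.

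Next, I would set $\bar{f}_m^\lambda:=q_m(S_n)S_n\fta$, write $f_m-\fta = q_m(S_n)(T_n^*\Y-S_n\fta)-p_m(S_n)\fta$, and decompose via $I=F_\eps+F_\eps^\perp$ into the three terms (I), (II), (III) used in the inner-case proof. The crucial new ingredient is that
\[
T_n^*\Y-S_n\fta = (T_n^*\Y-T^*f^*) + \lambda(S_n+\lambda I)^{-1}T^*f^*,
\]
so in terms (I) and (III) the noise contribution is still controlled by $\delta(\lambda)$ through the first component of event {\bf B'}$(\lambda)$, but we also pick up an additional ``approximation residue'' $\lambda(S_n+\lambda I)^{-1}T^*f^*$. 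Using the source condition via $T^*f^*=S^{r+\frac{1}{2}}w$ with $w=(S^{-\frac{1}{2}}T^*)u$ and $\norm{w}\leq\kappa^{-r}\rho$, the bound \eqref{eq:boundblah} already established in Lemma~\ref{le:apr1} controls this residue by $\Lambda^2\kappa^{-r}\rho\,\lambda^{r+\frac{1}{2}}$, which feeds in cleanly.

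For term (II), which becomes $\norm{F_\eps(S_n+\lambda I)^{\frac{1}{2}-\theta}p_m(S_n)\fta}$, the identity $\fta=(S_n+\lambda I)^{-1}S^{r+\frac{1}{2}}w$ lets one apply Lemma~\ref{le:lemma0} to $\phi(t)=p_m(t)(t+\lambda)^{-\frac{1}{2}-\theta}\mathbf{1}_{[0,\eps]}(t)$ with exponent $\nu=r+\frac{1}{2}\leq 1$; since $|p_m|\leq 1$ on $[0,\eps]$, this produces the $\kappa^{-r}\rho(\eps+\lambda)^{r-\theta}$-contribution. Term (III) proceeds exactly as in the inner case via $F_\eps^\perp=F_\eps^\perp S_n^{-1}S_n$ and the identity $S_n(f_m-\fta)=-T_n^*(\Y-T_nf_m)+(T_n^*\Y-T^*f^*)+\lambda(S_n+\lambda I)^{-1}T^*f^*$, giving the residue contribution $\eps^{-1}(\eps+\lambda)^{\frac{1}{2}-\theta}\norm{T_n^*(T_nf_m-\Y)}$ along with further copies of the noise and approximation-residue terms.

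The delicate step I expect to require the most care is term (II): unlike the inner case where $f^*_\cH=S^\mu w$ interacts transparently with $p_m(S_n)$ through the warping inequality \eqref{eq:boundin2}, here $\fta$ carries an extra factor $(S_n+\lambda I)^{-1}$ that must be absorbed into the spectral multiplier $\phi(S_n)$ before Lemma~\ref{le:lemma0} can be invoked. This also forces one to track the exponent $r+\frac{1}{2}$ (rather than $\mu$) in $Z_{\cdot}(\lambda)$; fortunately $r+\frac{1}{2}\leq 1$ so $Z_{r+\frac{1}{2}}(\lambda)=\lambda^{r+\frac{1}{2}}$ and no $\Delta$-term appears in this lemma. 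Finally, collecting (I), (II), (III), the approximation error from Lemma~\ref{le:apr1}, and simplifying by $\wt{\eps}\leq\eps\leq\eps+\lambda$ yields the announced bound; the case $m=0$ follows from the same decomposition with $p_0\equiv 1$, $q_0\equiv 0$.
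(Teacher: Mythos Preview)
Your proposal is correct and follows essentially the paper's proof. The only organizational difference is your choice of intermediate point $\bar f_m^\lambda=q_m(S_n)S_n\fta$ versus the paper's $\wt f_m=q_m(S_n)T^*f^*$; since these differ exactly by $\lambda q_m(S_n)\fta$, the ``approximation residue'' term that you place in (I) is, in the paper, absorbed into (II) (where $\wt f_m-\fta=-(p_m(S_n)-\lambda q_m(S_n))\fta$), and both routes require bounding the very same quantities.
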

\begin{proof}
We begin with 
\[
\norm{K^{-\theta} (Tf_m-f^*)}  \leq 
\norm{K^{-\theta} T (f_m-\fta)} + \norm{K^{-\theta} (T\fta-f^*)}
\]
and the second term is dealt with by Lemma~\ref{le:apr1}. For the first term,
we will follow the proof of Lemma~\ref{le:lemma2} with appropriate changes.
\newcommand{\ftm}{\wt{f}_m}
Set $\ftm=q_m(S_n)T^*f^*$\,. We have
\begin{align*}
\norm{K^{-\theta} T (f_m-\fta)}& = 
\norm{S^{\frac{1}{2}-\theta}(f_m-\fta)}\\
& \leq \Lambda^{1-2\theta} \norm{(S_n+\lambda I)^{\frac{1}{2}-\theta}(f_m - \fta)}\\
&\leq
\Lambda\left(\norm{F_{\eps}(S_n+\lambda I)^{\frac{1}{2}-\theta}(f_m-\ftm)}
+ \norm{F_{\eps} (S_n+\lambda I)^{\frac{1}{2}-\theta}(\ftm-\fta)}\right. \\
& \qquad \; \; \left. + \norm{F_{\eps}^\perp (S_n+\lambda I)^{\frac{1}{2}-\theta}(f_m-\fta)} \right)\\
  &:= \Lambda ( (I) + (II) + (III) )\,,
\end{align*}
We upper bound the first summand, using the first component of event {\bf B'}($\lambda$):
\begin{multline*}
(I) = \norm{F_{\eps}(S_n+\lambda I)^{\frac{1}{2}-\theta}q_m(S_n)(T_n^*\Y-T^*f^*)} \\
\begin{aligned}
& \leq \Lambda \norm{F_{\eps}(S_n+\lambda I)^{1-\theta}q_m(S_n)} 
\norm{(S+\lambda I)^{-\frac{1}{2}}(T_n^*\Y-T^*f^*)} \\
& \leq 2 \Lambda \delta(\lambda) \abs{p'_m(0)} 
\paren{ \lambda + \abs{p'_m(0)}^{-1}}^{1-\theta}\,;
\end{aligned}
\end{multline*}
the above calculation is almost identical to the handling of term (I) in the
proof of Lemma~\ref{le:lemma2}, and we refer to that proof for the details.
We turn to the second term:
\begin{align*}
(II) = \norm{F_{\eps} (S_n+\lambda I)^{\frac{1}{2}-\theta}(\ftm-\fta)}
& = \norm{F_{\eps} (S_n+\lambda I)^{\frac{1}{2}-\theta}(q_m(S_n)(S_n+\lambda I) 
- I)(S_n+\lambda I )^{-1}T^*K^ru}\\
& \leq \norm{F_{\eps} (S_n+\lambda I)^{-\theta-\frac{1}{2}}(p_m(S_n) + \lambda q_m(S_n)
)S^{r+\frac{1}{2}}}\norm{S^{-\frac{1}{2}}T^*u}\\
& \leq
\Lambda^2 \kappa^{-r} \rho 
\sup_{t\in[0,\eps]} \paren{\abs{p_m(t)} + \lambda \abs{q_m(t)}}\paren{t+\lambda}^{r-\theta}\\
& \leq\Lambda^2 \kappa^{-r} \rho \paren{(\eps + \lambda)^{r-\theta} + \lambda^{1+r-\theta}\abs{p'_m(0)}
+ \lambda \abs{p'_m(0)}^{1+\theta-r}}\\
& \leq c(\Lambda) \kappa^{-r} \rho (\wt{\eps} + \lambda)^{r-\theta}\paren{ 1 + \wt{\eps}^{-1}\lambda}\,,
\end{align*}
where for the penultimate inequality, we used the same arguments as in the proof
of Lemma~\ref{le:lemma2} to bound the quantities involving $\abs{p_m(x)}$ and
$\abs{q_m(x)}$ on the interval $[0,\eps]\subset[0,x_{1,m}]$\,. We finally
consider the third term; we recall that we can write $F_{\eps}^\perp = F_{\eps}^\perp S_n^{-1}S_n$ and
\begin{align*}
(III) = \norm{F_{\eps}^\perp (S_n+\lambda I)^{\frac{1}{2}-\theta}(f_m-\fta)}
& \leq \norm{F_{\eps}^\perp (S_n+\lambda I)^{1-\theta} S_n^{-1}}
\norm{F_{\eps}^\perp (S_n+\lambda I)^{-\frac{1}{2}} S_n(f_m-\fta)}\\
& \leq \frac{\paren{\eps+\lambda}^{1-\theta}}{\eps}\left(
\norm{F_{\eps}^\perp (S_n+\lambda I)^{-\frac{1}{2}} T_n^*(T_nf_m-\Y)} \right.\\
& \qquad \qquad \qquad \qquad + \norm{ (S_n+\lambda I)^{-\frac{1}{2}} (T_n^* \Y - T^*f^*)}\\
& \qquad \qquad \qquad \qquad \left.
 + \norm{F_{\eps}^\perp (S_n+\lambda I)^{-\frac{1}{2}} (T^*f^*-S_n\fta)}\right)\\
& \leq \frac{\paren{\eps+\lambda}^{\frac{1}{2}-\theta}}{\eps}\norm{T_n^*(T_nf_m-\Y)}
+ \frac{\paren{\eps+\lambda}^{1-\theta}}{\eps}
\delta(\lambda) + (IV)\,,
\end{align*}
with
\begin{align*}
(IV) := \eps^{-1}(\eps+\lambda)^{1-\theta} \norm{F_{\eps}^\perp (S_n+\lambda I)^{-\frac{1}{2}} (T^*f^*-S_n\fta)}
& = \lambda \eps^{-1}(\eps+\lambda)^{1-\theta} 
\norm{F_{\eps}^\perp (S_n+\lambda I)^{-\frac{3}{2}} T^*f^*}\\
&  \leq \lambda \eps^{-1} (\eps+\lambda)^{\frac{1}{2}-\theta} \norm{(S_n+\lambda I)^{-1} T^*f^*}\\
& \leq \Lambda^2 \kappa^{-r}  \rho \eps^{-1}(\eps+\lambda)^{\frac{1}{2}-\theta} \lambda^{r+\frac{1}{2}}\\
& \leq \Lambda^2 \kappa^{-r}  \rho (\eps+\lambda)^{r-\theta} (1+\eps^{-1}\lambda) \,,
\end{align*}
where we have reused inequality \eqref{eq:boundblah} at the second to last line.
Gathering the different terms now yields the announced inequality.
\end{proof}

\begin{lemma}[Bounding the residue, outer case]
\label{le:lemma1o}
Assume condition  {\bf SC($r,\rho$)} holds, $r< \frac{1}{2}$\,.
Let $\lambda>0$\, be fixed and assume event {\bf B($\lambda$)} holds.
Then for any iteration step $1\leq m\leq n_{\Y}$\,:
\begin{align}
\norm{T_n^*(T_n f_m -\Y)}  \leq & \Lambda^2 \paren{ 2 \abs{p'_m(0)}^{-(r+\frac{1}{2})}
  + \lambda^{r+\frac{1}{2}}}
\kappa^{-r}\rho 
 + \Lambda \paren{\abs{p'_m(0)}^{-\frac{1}{2}}+\lambda^{\frac{1}{2}}} \delta(\lambda)\,.
\label{eq:lemma1o}
\end{align}
\end{lemma}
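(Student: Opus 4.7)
The plan is to mirror the proof of Lemma~\ref{le:lemma1} (the inner case), replacing the role of $S_n f^*_\cH$ by the population quantity $T^*f^*$ (which is well defined as an element of $\cH$ even when $f^* \not\in \cH$), and exploiting the source condition in the form $T^*f^* = S^{r+\frac{1}{2}} v$ with $v$ of controlled norm. The event {\bf B'}$(\lambda)$ replaces {\bf B}$(\lambda)$; in particular the first component of {\bf B'}$(\lambda)$ supplies a bound on $\norm{(S+\lambda I)^{-\frac{1}{2}}(T_n^*\Y - T^*f^*)}$.

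First I would apply Nemirovskii's bound \eqref{eq:Nem1} from Lemma~\ref{lem:orthpol} to write
\[
\norm{T_n^*(T_n f_m - \Y)} = \norm{p_m(S_n) T_n^* \Y} \leq \norm{F_{x_{1,m}} \varphi_m(S_n) T_n^* \Y},
\]
and then split the right-hand side as
\[
\norm{F_{x_{1,m}} \varphi_m(S_n) T^* f^*} + \norm{F_{x_{1,m}} \varphi_m(S_n)(T_n^* \Y - T^* f^*)} =: (I) + (II).
\]
Term $(II)$ is handled exactly as in the inner case: insert $(S+\lambda I)^{\frac{1}{2}}(S+\lambda I)^{-\frac{1}{2}}$, use the first component of {\bf B'}$(\lambda)$ and the operator inequality \eqref{eq:boundin0} to pass from $(S+\lambda I)^{\frac{1}{2}}$ to $(S_n+\lambda I)^{\frac{1}{2}}$ at the cost of a factor $\Lambda$, then bound the resulting scalar multiplier via \eqref{eq:boundphi} applied with $\nu=0$ and $\nu=1$, yielding the term $\Lambda(\abs{p'_m(0)}^{-\frac{1}{2}} + \lambda^{\frac{1}{2}})\delta(\lambda)$.

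The substantive point is term $(I)$. Using the source condition $f^* = K^r u$ and the identity $T^* K^r = S^r T^*$ (which follows spectrally from $K=TT^*$, $S=T^*T$), together with the fact that $S^{-\frac{1}{2}}T^*$ is a partial isometry from $\mathcal{L}^2(\nu)$ into $\cH$, I would write $T^* f^* = S^{r+\frac{1}{2}} v$ where $v = S^{-\frac{1}{2}}T^* u$ satisfies $\norm{v}_\cH \leq \norm{u}_{2,\nu} \leq \kappa^{-r}\rho$. This reduces $(I)$ to bounding $\norm{F_{x_{1,m}} \varphi_m(S_n) S^{r+\frac{1}{2}}}$, and here the essential simplification relative to the inner case is that $r+\frac{1}{2} \leq 1$, so we may avoid the Hilbert-Schmidt comparison and the $Z_\mu$ quantity entirely. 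Instead I would use the version of inequality \eqref{eq:boundin2} valid for $\nu \leq 1$ (the second case in the proof of Lemma~\ref{le:lemma0}), namely
\[
\norm{\phi(S_n) S^{\nu}} \leq \Lambda^{2\nu}\bigl(\sup_t t^{\nu}\phi(t) + \lambda^{\nu} \sup_t \phi(t)\bigr),
\]
applied with $\nu = r+\frac{1}{2}$ and $\phi = F_{x_{1,m}}\varphi_m$.

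The remaining step is a pointwise bound on $t^{r+\frac{1}{2}}\varphi_m(t)$ and on $\varphi_m(t)$ over $[0,x_{1,m}]$, provided by \eqref{eq:boundphi} with exponents $\nu=2r+1$ and $\nu=0$ respectively, giving $\sup_t t^{r+\frac{1}{2}}\varphi_m(t) \leq (2r+1)^{r+\frac{1}{2}} \abs{p'_m(0)}^{-(r+\frac{1}{2})} \leq 2\abs{p'_m(0)}^{-(r+\frac{1}{2})}$ since $r<\frac{1}{2}$, and $\sup_t \varphi_m(t)\leq 1$. Putting this together and absorbing $\Lambda^{2r+1}$ into $\Lambda^2$ produces the bound $\Lambda^2(2\abs{p'_m(0)}^{-(r+\frac{1}{2})} + \lambda^{r+\frac{1}{2}})\kappa^{-r}\rho$ for $(I)$. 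Combining $(I)$ and $(II)$ yields \eqref{eq:lemma1o}. The main (modest) obstacle is simply the switching identity $T^* K^r = S^r T^*$ for non-integer $r$ and the correct identification of the norm of $v$; the rest is a direct transcription of the inner-case argument with the spectral exponent shifted from $\mu = r-\frac{1}{2}$ to $r+\frac{1}{2}$ and the auxiliary Hilbert-Schmidt error dropped.
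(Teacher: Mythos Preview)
Your proposal is correct and follows essentially the same route as the paper's proof: the same Nemirovskii bound \eqref{eq:Nem1}, the same split around $T^*f^*$, the same handling of $(II)$ via the first component of {\bf B'}$(\lambda)$, and the same treatment of $(I)$ via $T^*f^* = S^{r+\frac{1}{2}}(S^{-\frac{1}{2}}T^*)u$ together with the $\nu\leq 1$ case of \eqref{eq:boundin2} and \eqref{eq:boundphi} at $\nu=2r+1,\,0$. You also correctly note that the relevant event is {\bf B'}$(\lambda)$ (the lemma statement's ``{\bf B}$(\lambda)$'' is a typo, as the paper's own proof uses the first component of {\bf B'}$(\lambda)$).
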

\begin{proof}
The proof is similar to that of Lemma~\ref{le:lemma1} up to the fact that
we use $T^*f^*$ instead of $S_n f^*_\cH$\,, so that we skip some details. The main inequality becomes
\[ \norm{T_n^*(T_n f_m -\Y)} 
 \leq \norm{F_{x_{1,m}} \varphi_m(S_n) T^* f^*} + \norm{F_{x_{1,m}} \varphi_m(S_n)
(T_n^* \Y - T^* f^*)} := (I) + (II),
\]
where we used \eqref{eq:Nem1} of Lemma~\ref{lem:orthpol} and the notation therein.
The second term is controlled exactly as in the proof of Lemma~\ref{le:lemma1},
only we use the first component of {\bf B'}{$(\lambda)$} instead of that of {\bf B}{$(\lambda)$}.
It gives rise to
\[
(II) \leq 
\paren{\abs{p'_m(0)}^{-\frac{1}{2}} + \lambda^{\frac{1}{2}}}
\Lambda \delta(\lambda)\,.
\]
For the first term, we use assumption {\bf SC($r,\rho$)}, then \eqref{eq:boundin2}
with $r<\frac{1}{2}$\,:
\begin{align*}
(I) = \norm{F_{x_{1,m}} \varphi_m(S_n) T^* f^*} & =
\norm{F_{x_{1,m}} \varphi_m(S_n) S^{r+\frac{1}{2}} (S^{-\frac{1}{2}}T^*)   u}\\
& \leq \Lambda^{2} \paren{\sup_{t\in[0,x_{1,m}]} t^{r+\frac{1}{2}} \varphi_m(t)
+ \lambda^{r+\frac{1}{2}} \sup_{t\in[0,x_{1,m}]} \varphi_m(t)} \kappa^{-r}\rho\\
& \leq \Lambda^2 \paren{ 2  \abs{p'_m(0)}^{-(r+\frac{1}{2})} + \lambda^{r+\frac{1}{2}} }
\kappa^{-r}\rho\,,
\end{align*}
where for the last inequality we applied \eqref{eq:boundphi} with $\nu = 2r+1 \leq 2$\,, $\nu=0$\,.
\end{proof}

Finally, the following lemma is the counterpart of Lemma~\ref{le:relpol} in the outer case:
\begin{lemma}
\label{le:relpolo}
Assume condition  {\bf SC($r,\rho$)} holds, $r<\frac{1}{2}$\,.
For any $\lambda>0$\,, if the event {\bf B($\lambda$)} is satisfied,
then for any iteration step $1 \leq m \leq n_{\Y}$\,, and any
$\eps \in (0,x_{1,m-1})$\,:
\begin{align}
\brac{p_{m-1},p_{m-1}}_{(0)}^{\frac{1}{2}} &
= \norm{p_{m-1}(S_n)T_n^*\Y} \nonumber \\
& \leq \Lambda (\eps+\lambda)^\frac{1}{2}
\delta(\lambda) + \Lambda^2\paren{\eps^{r+\frac{1}{2}} +
\lambda^{r+\frac{1}{2}}  }\kappa^{-r}\rho +
\eps^{-\frac{1}{2}}\brac{p_{m-1}^{(2)},p_{m-1}^{(2)}}^{\frac{1}{2}}_{(1)}\,.
\label{eq:le3o}
\end{align}
\end{lemma}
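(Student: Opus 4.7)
The plan is to mirror the three-step structure of the proof of Lemma~\ref{le:relpol}, but to replace everywhere the intermediate Hilbert-space representative $S_n f^*_\cH$ (which is not available when $r<\tfrac12$) by $T^*f^*$, using the first component of the event $\mathbf{B'}(\lambda)$ instead of $\mathbf{B}(\lambda)$, and using the source condition in the form $T^*f^* = S^{r+\frac{1}{2}}(S^{-\frac{1}{2}}T^*)u$.

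First I would invoke CG optimality to pass from $p_{m-1}$ to $p^{(2)}_{m-1}$, i.e.\ use $\norm{p_{m-1}(S_n)T_n^*\Y}\leq \norm{p^{(2)}_{m-1}(S_n)T_n^*\Y}$, then split by $F_\eps$ and $F_\eps^\perp$. The high-eigenvalue part $\norm{F_\eps^\perp p^{(2)}_{m-1}(S_n)T_n^*\Y}$ is handled by inserting $S_n^{\frac{1}{2}}S_n^{-\frac{1}{2}}$ and using the eigenvalue gap on $F_\eps^\perp$ to yield $\eps^{-\frac{1}{2}}[p^{(2)}_{m-1},p^{(2)}_{m-1}]^{\frac{1}{2}}_{(1)}$. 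For the low-eigenvalue part I use the fact (invoked in the same way as in Lemma~\ref{le:relpol}) that $\abs{p^{(2)}_{m-1}(x)}\leq 1$ for $x\in[0,x^{(2)}_{1,m-1}]$, together with $\eps<x_{1,m-1}\leq x^{(2)}_{1,m-1}$ (Lemma~\ref{lem:orthpol}(iv)), to reduce to bounding $\norm{F_\eps T_n^*\Y}$.

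Then I bound $\norm{F_\eps T_n^*\Y}$ by the triangle inequality through $T^*f^*$:
\[
\norm{F_\eps T_n^*\Y}\leq \norm{F_\eps (T_n^*\Y-T^*f^*)}+\norm{F_\eps T^*f^*}.
\]
For the noise term, I insert $(S_n+\lambda I)^{\frac{1}{2}}(S_n+\lambda I)^{-\frac{1}{2}}$ and then $(S+\lambda I)^{\frac{1}{2}}(S+\lambda I)^{-\frac{1}{2}}$, use the third component of $\mathbf{B'}(\lambda)$ (i.e.\ $\norm{(S+\lambda I)^{\frac{1}{2}}(S_n+\lambda I)^{-\frac{1}{2}}}\leq\Lambda$ via \eqref{eq:boundin0}), the sup bound $\norm{F_\eps (S_n+\lambda I)^{\frac{1}{2}}}\leq (\eps+\lambda)^{\frac{1}{2}}$, and the first component of $\mathbf{B'}(\lambda)$, which together give the contribution $\Lambda(\eps+\lambda)^{\frac{1}{2}}\delta(\lambda)$.

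For the signal term I use the source condition rewritten as $T^*f^* = T^*K^r u = S^{r+\frac{1}{2}}(S^{-\frac{1}{2}}T^*)u$, noting that $S^{-\frac{1}{2}}T^*$ is a partial isometry from $\cL^2(\nu)$ into $\cH$ so that $\norm{(S^{-\frac{1}{2}}T^*)u}_\cH\leq \norm{u}_{2,\nu}\leq \kappa^{-r}\rho$. Then I apply \eqref{eq:boundin2} with $\phi=\mathbf{1}_{[0,\eps)}$ (so that $\phi(S_n)=F_\eps$) and $\nu=r+\tfrac12\in(0,1)$, which yields $\norm{F_\eps S^{r+\frac{1}{2}}}\leq \Lambda^2(\eps^{r+\frac{1}{2}}+\lambda^{r+\frac{1}{2}})$, and hence the contribution $\Lambda^2(\eps^{r+\frac{1}{2}}+\lambda^{r+\frac{1}{2}})\kappa^{-r}\rho$. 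Summing the three contributions produces \eqref{eq:le3o}. There is no real obstacle; the only place requiring care is to ensure that the exponent $r+\tfrac12$ satisfies the hypothesis $\nu\leq 1$ of \eqref{eq:boundin2} (which holds precisely because we are in the outer regime $r<\tfrac12$), so that no Lipschitz-type $Z_\nu$ correction appears and the bound matches the form stated.
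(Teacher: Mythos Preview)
Your proposal is correct and essentially identical to the paper's own proof: the paper likewise reduces to $\norm{F_\eps T_n^*\Y}$ via CG optimality and the $F_\eps/F_\eps^\perp$ split exactly as in Lemma~\ref{le:relpol}, then decomposes through $T^*f^*$, handles the noise term via $(S_n+\lambda I)^{\frac12}$ and the first component of $\mathbf{B'}(\lambda)$, and bounds the signal term by writing $T^*f^*=S^{r+\frac12}(S^{-\frac12}T^*)u$ and applying~\eqref{eq:boundin2} with $\nu=r+\tfrac12<1$. Your observation that the relevant event is $\mathbf{B'}(\lambda)$ (the lemma statement's ``$\mathbf{B}(\lambda)$'' is a typo) is also right.
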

\begin{proof}
The first step of the proof is unchanged with respect to that of Lemma~\ref{le:relpol},
and we refer to it for the details:
\[
\norm{p_{m-1}(S_n)T_n^*\Y} 
\leq \norm{F_\eps T_n^* \Y} +\eps^{-\frac{1}{2}}\brac{p_{m-1}^{(2)},p_{m-1}^{(2)}}^{\frac{1}{2}}_{(1)}\,.
\]
Then we follow again the proof of Lemma~\ref{le:relpol}, but using $T^*f^*$ in place of $S_nf^*_\cH$\,:
\begin{align*}
\norm{F_\eps T_n^* \Y }  \leq \norm{F_\eps (T_n^* \Y - T^*
  f^*)} + \norm{F_\eps T^* f^*} 
%
& \leq \Lambda(\eps+\lambda)^{\frac{1}{2}}\delta(\lambda) 
+ \norm{F_\eps S^{r+\frac{1}{2}} (S^{-\frac{1}{2}} T^*) u}\,\\
& \leq \Lambda(\eps+\lambda)^{\frac{1}{2}}\delta(\lambda) + \Lambda^2\paren{\eps^{r+\frac{1}{2}} +
\lambda^{r+\frac{1}{2}}  }\kappa^{-r}\rho,
\end{align*}
where we have used~\eqref{eq:boundin2} (with $r<\frac{1}{2}$) for the last line.
\end{proof}
We provide for completeness the following simple lemma, which was used 
a couple of times:
\begin{lemma}
\label{le:suptik}
Let $\lambda>0$ and $\nu\in[0,1]$ be fixed. Then
\[
\sup_{t \in \mbr_+} \frac{t^\nu}{t+\lambda} = C(\nu) \lambda^{\nu-1},
\]
with $C(\nu)=\nu^\nu(1-\nu)^{(1-\nu)}\in [\frac{1}{2},1]$ if $\nu\in(0,1)$\,, and $C(0)=1$\,, $C(1)=1$\,.
\end{lemma}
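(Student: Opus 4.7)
The plan is to treat the boundary values $\nu\in\{0,1\}$ separately and reduce the interior case $\nu\in(0,1)$ to a one-variable optimization. For $\nu=0$ the supremum is $\sup_{t>0}(t+\lambda)^{-1}=\lambda^{-1}$, attained in the limit $t\downarrow 0$, and for $\nu=1$ we have $t/(t+\lambda)\to 1$ as $t\to\infty$; in both cases the claimed formula with $C(\nu)=1$ holds.

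For $\nu\in(0,1)$, I would set $f(t):=t^\nu/(t+\lambda)$ on $(0,\infty)$, note $f(0^+)=0$ and $f(t)\to 0$ as $t\to\infty$, so the supremum is attained at an interior critical point. Differentiating gives
\[
f'(t)=\frac{t^{\nu-1}\bigl((\nu-1)t+\nu\lambda\bigr)}{(t+\lambda)^2},
\]
whose unique positive root is $t^*=\nu\lambda/(1-\nu)$. Plugging back in, one obtains $t^*+\lambda=\lambda/(1-\nu)$ and
\[
f(t^*)=\frac{(\nu\lambda/(1-\nu))^\nu}{\lambda/(1-\nu)}=\nu^\nu(1-\nu)^{1-\nu}\lambda^{\nu-1},
\]
which yields $C(\nu)=\nu^\nu(1-\nu)^{1-\nu}$.

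Finally, to bracket $C(\nu)\in[1/2,1]$, I would take logarithms: $\log C(\nu)=\nu\log\nu+(1-\nu)\log(1-\nu)=-H(\nu)$, where $H$ is the binary entropy. Since $H$ is nonnegative and maximized at $\nu=1/2$ with value $\log 2$, we get $\log C(\nu)\in[-\log 2,0]$, i.e. $C(\nu)\in[1/2,1]$, with $C(1/2)=1/2$ and $C(\nu)\to 1$ as $\nu\to 0$ or $\nu\to 1$ (consistent with the boundary values). There is no real obstacle here; the only small care needed is to check the limits $\nu\to 0^+$ and $\nu\to 1^-$ to confirm continuity of $C$ up to the endpoints.
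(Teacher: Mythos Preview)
Your proof is correct and follows essentially the same approach as the paper: handle the endpoints directly, and for $\nu\in(0,1)$ locate the unique critical point $t^*=\nu\lambda/(1-\nu)$ by differentiation and evaluate. You additionally justify the bracket $C(\nu)\in[1/2,1]$ via the binary entropy, which the paper leaves implicit (only remarking that $C(\nu)\leq 1$ also follows from $(t+\lambda)\geq t^\nu\lambda^{1-\nu}$).
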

\begin{proof}
If $\nu\in(0,1)$\,, the derivative of the function is equal to
$t^{\nu-1}((\nu-1)t + \nu \lambda)/(t+\lambda^2)$\,. The value
$t^*:=\nu\lambda/(1-\nu)$ is the position of the unique maximum on
$\mbr_+$\,, giving rise to the result. The special cases $\nu=0,1$ are
treated easily. Alternatively, the upper bound resulting from 
$C(\nu)\leq 1$ can be obtained more directly by using the inequality
$(t+\lambda)\geq t^\nu\lambda^{1-\nu}$\,. 
\end{proof}

{\bf Proof of Theorem~\ref{thm:outer}}

We fix the following values for $\lambda_*,\blambda_*$ similarly to the inner rate case:
\begin{equation}
\label{eq:lambdastaro}
\blambda_* = \paren{\frac{4D}{\sqrt{n}} \log \frac{ 4}{ \gamma}}^{\frac{2}{2r+s}}\,, 
\text{ and } \lambda_* := \kappa \blambda_*\,,
\end{equation}
satisfying $\blambda_*\leq 1$ because of assumption \eqref{eq:condnouter}\,.
The discrepancy stopping rule in the outer case can be rewritten as follows: for some fixed $\tau>0$\,,
\begin{equation}
\label{eq:apriorisro}
\wh{m} := \min\set{m \geq 0 : \norm{T_n^*(T_n f_m - \Y)} \leq
  (8+\tau)\max\paren{1,\frac{\rho}{M}}\lambda_*^{\frac{1}{2}} \delta(\lambda_*)}\,,
\end{equation}
where 
\begin{equation}
  \label{eq:deltastaro}
\delta(\lambda_*):=  
\frac{3}{4} M \blambda_*^{r}\,.
\end{equation}
(Observe that $\tau'>6$ from \eqref{eq:outthr} and $\tau'>0$ in \eqref{eq:apriorisro}
are related via $\tau = \frac{4}{3}(\tau' - 6)$\,.)


We check that event {\bf B'($\lambda_*$)}, is satisfied with
large probability.
To check the first component, we use \eqref{eq:byby}
instead of \eqref{eq:bybn}. Since the easily checked relation
$T_{\wtn}^*\wY = T^*_{n} \Y$ holds,
we have with probability $1-\gamma/2$\,,
\begin{equation}
\norm{(S+\lambda_* I)^{-\frac{1}{2}}(T_{\wtn}^* \wY - T^* f^*)}
 = \norm{(S+\lambda_* I)^{-\frac{1}{2}}(T_n^* \Y - T^* f^*)}
\leq \frac{3}{4} M \blambda_*^{\mu+\frac{1}{2}}= \delta(\lambda_*)\,,
\label{eq:defdelta1o}
\end{equation}
where this inequality follows from identical steps as for~\eqref{eq:defdelta1},
to which we refer for details (remember the notation $\mu=r-{\frac{1}{2}}$\,, there).
It is worth noting that in order for the argument leading to~\eqref{eq:defdelta1}
to be valid, we need to use the assumption $r+s\geq \frac{1}{2}$\,.
This ensures the first component
of {\bf B'$(\lambda_*)$} is satisfied with probability $1-\gamma/2$\,.
We now turn to the second component. We can apply the deviation inequality \eqref{eq:reloperror}
but with $n$ replaced by $\wt{n}$\,, since we make use of all the unlabeled data. 
Using the fact that $\frac{n}{\tilde n} \leq n^{-\frac{1-2r}{2r+s}} \leq {\blambda_*}^{1-2r}$\,,
we obtain that with probability $1-\gamma/2$\,:
\begin{align*}
\norm{(S+\lambda_* I)^{-\frac{1}{2}}(S_{\wtn}-S)}_{HS} 
& \leq
 2\sqrt{\kappa}
\paren{\sqrt{\frac{\cN(\lambda_*)}{\wtn}} + \frac{2\sqrt{\kappa}}{\sqrt{\lambda_*}\wtn}}
\log \frac{4}{\gamma} \\
& \leq
 \frac{\sqrt{\kappa}}{2} \paren{\frac{4D}{\sqrt{n}} \log \frac{4}{\gamma}} 
\paren{\blambda_*^{-\frac{s}{2}+\frac{1}{2}-r} 
+  \frac{1}{2D^2}\paren{\frac{4D}{\sqrt{n}} \log \frac{4}{\gamma}}
\blambda_*^{\frac{1}{2}-2r}} \\
& \leq \frac{\sqrt{\kappa}}{2}
\blambda_*^{\frac{1}{2}} \paren{1+\frac{1}{2}\blambda_*^{s}} \\
&  \leq \frac{3}{4} \sqrt{\kappa} \blambda_*^{\frac{1}{2}} =: \wdel(\lambda_*)\,,
\end{align*}
so that the second component of {\bf B'($\lambda_*$)} is satisfied with the 
above value for $\wdel(\lambda)$\,; moreover
\[
\norm{(S+\lambda_*I)^{-1}(S_n-S)} \leq
\lambda_*^{-\frac{1}{2}} \wdel(\lambda_*)  = \frac{3}{4} \,,
\]
implying (by the same argument as in the proof of Theorem~\ref{thm:inner}) that the third
compoment of {\bf B'($\lambda_*$)} is satisfied with $\Lambda:=2$\,. We can observe in
passing that obtaining the above inequality was the technical reason for introducing
the additional unlabeled data in the outer case, 
since using the labeled data alone would not have
granted it for this choice of $\lambda_*$\,.
Summarizing, the three components of event {\bf B'$(\lambda_*)$} are satisfied simultaneously 
with probability larger than $1-\gamma$\,. 
We assume for the rest of the proof that this event is satisfied.

We assume $\wh{m}\geq 1$\, for the remainder of the proof and postpone
to the end the (simpler) case $\wh{m}=0$\,.

{\bf First step:} upper bound on $\abs{p'_{\wh{m}-1}(0)}$\,.\\
 By definition of the stopping rule, we
have $\norm{T_n^*(T_n f_{\wh{m}-1} -\Y)} > (8 + \tau) \max(1,\frac{\rho}{M})
\lambda_*^\frac{1}{2} \delta(\lambda_*)$\,. Applying this together
with the upper bound of Lemma \ref{le:lemma1o},
observing that \eqref{eq:lambdastaro} entails 
$\Lambda^2 \lambda_*^{r+\frac{1}{2}} \kappa^{-r} \rho \leq 6 \rho 
\lambda_*^{\frac{1}{2}} \delta(\lambda_*)$\,, and rearranging, we get
\begin{align*}
\max(1,\frac{\rho}{M}) \tau \lambda_*^\frac{1}{2} \delta(\lambda_*)
& \leq  8 \abs{p'_{\wh{m}-1}(0)}^{-(r+\frac{1}{2})}
\kappa^{-r}\rho + 2 \abs{p'_{\wh{m}-1}(0)}^{-\frac{1}{2}} \delta(\lambda_*)\,\\
& 
\leq 10 \max
\paren{\abs{p'_{\wh{m}-1}(0)}^{-\frac{1}{2}} \delta(\lambda_*),
\abs{p'_{\wh{m}-1}(0)}^{-(r+\frac{1}{2})} \kappa^{-r}\rho}\, 
\end{align*}
If the maximum on the RHS is attained for the first term, 
this implies \\ $|p'_{\wh{m}-1}(0)| \leq 4\min(1,\frac{M}{\rho}) \tau^{-2} \lambda_*^{-1}\,.$
If the second term attains the maximum, this entails via \eqref{eq:deltastaro}
\[
10 \rho \kappa^{-r}
    \abs{p'_{\wh{m}-1}(0)}^{-(r+\frac{1}{2})} \geq \tau \max(1,\frac{\rho}{M}) 
\lambda_*^\frac{1}{2} \delta(\lambda_*) = 
\frac{3}{4}\tau \max(M,\rho) \kappa^{-r} \lambda_*^{r+\frac{1}{2}}\,,
\]
so that
\[
\abs{p'_{\wh{m}-1}(0)} \leq
c(r,\tau) \min(1,\frac{\rho}{M})^\frac{1}{r+\frac{1}{2}} \lambda_*^{-1}\,.
\]
Gathering the two cases, we obtain that it always holds that
\begin{equation}
\label{eq:boundppmm1o}
\abs{p'_{\wh{m}-1}(0)} \leq c(r,\tau) \lambda_*^{-1}\,.
\end{equation}

{\bf Second step:} upper bound on $\abs{p'_{\wh{m}}(0)}$\,. 
Apply Lemma \ref{le:relpolo}
with the choice $\lambda=\lambda_*$ and
\[
\eps = \epsos := \azero(r,\tau) \lambda_*\,,
\]
where $0< \azero(r,\tau)\leq 1$ will be chosen small enough in order to satisfy some
constraints to be specified. The first constraint is the
requirement $\epsos \in (0,x_{1,m-1})$ in order to apply Lemma
\ref{le:relpolo}. For this, it can be seen from \eqref{eq:boundppmm1o}
that $\azero(r,\tau)$ can be chosen small enough to ensure
\[
\epsos \leq \abs{p'_{m-1}(0)}^{-1} \leq x_{1,m-1}\,.
\]
We can now apply Lemma~\ref{le:relpolo}. We upper bound the following quantity 
appearing on the RHS of \eqref{eq:le3o}:
\begin{align}
\Lambda (\eps_0+\lambda_*)^\frac{1}{2}
\delta(\lambda_*) + \Lambda^2\paren{\eps_0^{r+\frac{1}{2}} +
\lambda_*^{r+\frac{1}{2}}  }\kappa^{-r}\rho \nonumber
& \leq 2(\azero(r,\tau)+1)\lambda_*^{\frac{1}{2}}\delta(\lambda_*)
+ 4(\azero(r,\tau)^{r+\frac{1}{2}}+1) \lambda_*^{\frac{1}{2}} \blambda_*^r \rho \nonumber\\
& \leq  (8+8\azero(r,\tau)^{\frac{1}{2}})
\max(1,\frac{\rho}{M})\lambda_*^{\frac{1}{2}}\delta(\lambda_*)\,, \label{eq:bepso}
\end{align}
We can choose $\azero(r,\tau)$ small enough so that in addition to the previous constraint, 
it satisfies $\azero(\mu,\tau)^\frac{1}{2}\leq \frac{\tau}{16}$\,.
Remember that the definition of
the stopping rule entails
\begin{equation}
\label{eq:resro}
\brac{p_{m-1},p_{m-1}}_{(0)}^{\frac{1}{2}} = \norm{T_n^*(T_n f_{\wh{m}-1} -\Y)} > (8 + \tau)
\max(1,\frac{\rho}{M})\lambda_*^\frac{1}{2} \delta(\lambda_*)\,.
\end{equation}
Combining
\eqref{eq:le3o}, \eqref{eq:resro} and \eqref{eq:bepso}, we obtain
\begin{align*}
\brac{p_{m-1},p_{m-1}}_{(0)}^{\frac{1}{2}} 
&\leq (8 + \tau/2) \max(1,\frac{\rho}{M}) \lambda_*^\frac{1}{2} \delta(\lambda_*) +
\epsos^{-\frac{1}{2}}\brac{p_{m-1}^{(2)},p_{m-1}^{(2)}}^{\frac{1}{2}}_{(1)}\\
&\leq \frac{8 + \tau/2}{8+\tau} \brac{p_{m-1},p_{m-1}}_{(0)}^{\frac{1}{2}} +
\epsos^{-\frac{1}{2}}\brac{p_{m-1}^{(2)},p_{m-1}^{(2)}}^{\frac{1}{2}}_{(1)}
\end{align*}
so that
\[
\paren{2+16\tau^{-1}}^{-1}\brac{p_{m-1},p_{m-1}}_{(0)}^{\frac{1}{2}} \leq
\epsos^{-\frac{1}{2}} \brac{p_{m-1}^{(2)},p_{m-1}^{(2)}}_{(1)}^{\frac{1}{2}}\,;
\]
using this inequality in relation with \eqref{eq:pcompar} and \eqref{eq:boundppmm1o}, 
we obtain
\begin{equation}
\label{eq:boundppo}
\abs{p_{\wh{m}}'(0)}  \leq \abs{p_{\wh{m}-1}'(0)} + c(\tau)\epsos^{-1} \leq 
c(r,\tau) \lambda_*^{-1}\,.
\end{equation}

{\bf Final step.} We want to apply the main error bound of Lemma~\ref{le:lemma2} with
$\lambda=\lambda_*$ and $\eps=\eps_* = a(r,\tau) \lambda_*$\,.
In view of~\eqref{eq:boundppo}, we can choose $a(\mu,\tau) \in (0,1]$
small enough so that to ensure 
\[
\eps_* \leq \abs{p'_{m}(0)}^{-1} \leq x_{1,m}\,.
\]
Recall that in the notation of Lemma~\ref{le:lemma2o}, 
$\wt{\eps}_* = \min(\eps_*,\abs{p'_m(0)}^{-1})$\,,
so that with the above choice we have $\wt{\eps}_* = \eps_*$\,.
We now apply Lemma~\ref{le:lemma2o}, plug in the inequality (by definition of the stopping rule)
\[
\norm{T_n^*(T_nf_{\wh{m}}-\Y)} \leq (8+\tau) \max(1,\frac{\rho}{M}) 
\lambda_*^{\frac{1}{2}} \delta(\lambda_*)\,,
\]
to obtain:
\begin{align*}
\norm{K^{-\theta}(f_\wh{m}-f^*)} \leq & c(r,\tau) 
\Bigg( \eps_*^{-1}\paren{\eps_*+\lambda_*}^{\frac{1}{2}-\theta} \max(1,\frac{\rho}{M}) 
\lambda_*^{\frac{1}{2}} \delta(\lambda_*)
 + \wt{\eps}_*^{-1}\paren{\lambda_* + \wt{\eps}_*}^{1-\theta}\delta(\lambda_*)\\
& \qquad \qquad + \kappa^{-r}\rho \paren{\lambda_*+\eps_*}^{r-\theta} 
\big(1 + \lambda_*^{-\frac{1}{2}}\wdel(\lambda) + \wt{\eps}_*^{-1}\lambda_*
\big) \Bigg)\\
& \leq c(r,\tau)\max(\rho,M)\kappa^{-\theta}\lambda_*^{r-\theta}\\
& = c(r,\tau)
\max\paren{\rho,M} \kappa^{-\theta} 
\paren{\frac{4D}{\sqrt{n}}\log\frac{4}{\gamma}}^{\frac{2(r-\theta)}{2r+s}}\,.
\end{align*}
If $\wh{m}=0$\,, we can apply directly Lemma \ref{le:lemma2o} as above
without requiring the two previous steps, since
in this case $p'_0(0)=0$\,, so that we obtain the same final bound.

\bibliographystyle{plainnat}
\bibliography{kcg_journal}

\end{document}